\newtheorem{lemma}{Lemma}[section]
\newtheorem{theorem}{Theorem}[section]
\newtheorem{corollary}{Corollary}[section]
\newtheorem{remark}{Remark}[section]
\newtheorem{definition}{Definition}[section]
\newtheorem{assumption}{Assumption}[section]
\begin{document}
%
% paper title
% Titles are generally capitalized except for words such as a, an, and, as,
% at, but, by, for, in, nor, of, on, or, the, to and up, which are usually
% not capitalized unless they are the first or last word of the title.
% Linebreaks \\ can be used within to get better formatting as desired.
% Do not put math or special symbols in the title.
\title{Shuffling Gradient Descent-Ascent  with Variance Reduction for Nonconvex-Strongly Concave Smooth Minimax Problems}
%
%
% author names and IEEE memberships
% note positions of commas and nonbreaking spaces ( ~ ) LaTeX will not break
% a structure at a ~ so this keeps an author's name from being broken across
% two lines.
% use \thanks{} to gain access to the first footnote area
% a separate \thanks must be used for each paragraph as LaTeX2e's \thanks
% was not built to handle multiple paragraphs
%

\author{Xia Jiang, Linglingzhi Zhu, Anthony Man-Cho So, Shisheng Cui, Jian Sun

% \thanks{This work was supported in part by the.. \emph{(Corresponding author: Anthony Man-Cho So.)}
% }
\thanks{X. Jiang (xiajiang@cuhk.edu.hk), L. Zhu (llzzhu@se.cuhk.edu.hk) and A. M.-C. So (manchoso@se.cuhk.edu.hk) are with Department of Systems Engineering and Engineering Management, The Chinese University of Hong Kong, Hong Kong, 999077.
}
\thanks{J. Sun (sunjian@bit.edu.cn) and S. Cui (css@bit.edu.cn) are with the National Key Laboratory of Autonomous Intelligent Unmanned Systems, School of Automation, Beijing Institute of Technology, Beijing 10081, China.}
}
\maketitle

% As a general rule, do not put math, special symbols or citations
% in the abstract or keywords.
\begin{abstract}
In recent years, there has been considerable interest in designing stochastic first-order algorithms to tackle finite-sum smooth minimax problems. To obtain the gradient estimates, one typically rely on the uniform sampling-with-replacement scheme or various sampling-without-replacement (also known as shuffling) schemes. While the former is easier to analyze, the latter often have better empirical performance. In this paper, we propose a novel single-loop stochastic gradient descent-ascent (GDA) algorithm that employs both shuffling schemes and variance reduction to solve nonconvex-strongly concave smooth minimax problems. We show that the proposed algorithm achieves $\epsilon$-stationarity in expectation in $\mathcal{O}(\kappa^2 \epsilon^{-2})$ iterations, where $\kappa$ is the condition number of the problem. This outperforms existing shuffling schemes and matches the complexity of the best-known sampling-with-replacement algorithms. Our proposed algorithm also achieves the same complexity as that of its deterministic counterpart, the two-timescale GDA algorithm. Our numerical experiments demonstrate the superior performance of the proposed algorithm.
\end{abstract}

% Note that keywords are not normally used for peerreview papers.
\begin{IEEEkeywords}
minimax optimization, stochastic algorithms, sampling-without-replacement, variance reduction
\end{IEEEkeywords}

% For peer review papers, you can put extra information on the cover
% page as needed:
% \ifCLASSOPTIONpeerreview
% \begin{center} \bfseries EDICS Category: 3-BBND \end{center}
% \fi
%
% For peerreview papers, this IEEEtran command inserts a page break and
% creates the second title. It will be ignored for other modes.
\IEEEpeerreviewmaketitle

% \Lingzhi{Some references: \\
% 1. SGDA with shuffling: faster convergence for nonconvex-PŁ minimax optimization\\
% 2. Random reshuffling with variance reduction: New analysis and better rates\\
% 3. An Improved Analysis and Rates for Variance Reduction under Without-replacement Sampling Orders\\
% 4. Variance reduced stochastic learning under random reshuffling\\
% 5. Tighter Lower Bounds for Shuffling SGD: Random Permutations and Beyond\\
% 6. Complexity lower bounds for nonconvex-strongly-concave min-max optimization\\
% 7. Faster Single-loop Algorithms for Minimax Optimization without Strong Concavity\\
% % 6. On the Last-Iterate Convergence of Shuffling Gradient Methods\\
% }

% \Lingzhi{sample size $n$ and condition number dependence; best-known (i.e., $\kappa$) single and double-loop algorithm (same with smoothed GDA in [7]), only triple-loop there is a optimal algorithm with condition number $\kappa$. \\}

% \Lingzhi{Robust control, Reinforcement Learning\\
% 1. Hybrid Block Successive Approximation for One-Sided Non-Convex Min-Max Problems: Algorithms and Applications\\
% 2. SBEED: Convergent Reinforcement Learning with Nonlinear Function Approximation
% }

\section{Introduction}
% The very first letter is a 2 line initial drop letter followed
% by the rest of the first word in caps.
%
% form to use if the first word consists of a single letter:
% \IEEEPARstart{A}{demo} file is ....
%
% form to use if you need the single drop letter followed by
% normal text (unknown if ever used by the IEEE):
% \IEEEPARstart{A}{}demo file is ....
%
% Some journals put the first two words in caps:
% \IEEEPARstart{T}{his demo} file is ....
%
% Here we have the typical use of a "T" for an initial drop letter
% and "HIS" in caps to complete the first word.
%minimax problems and its applications
\IEEEPARstart{M}{inimax} optimization problems have attracted widespread interest in recent years due to their diverse applications in various fields, including adversarial learning \cite{NIPS2014_GAN}, reinforcement learning \cite{RL_saddle}, robust controller design \cite{robust_control}, resource allocation \cite{source_allo_conti} and model predictive control \cite{mini_model_pre}. While the ultimate objective is to train models that perform well to unseen data, in practice, we deal with a finite dataset during training. This leads to the finite-sum minimax optimization problem addressed in this paper:
% This paper considers the following minimax problem %saddle-point
\begin{align}\tag{P}\label{opti_pro}
\min_{x\in \mathbb{R}^{d}}\max_{y\in \mathbb{R}^d} f(x,y):=\frac{1}{n} \sum_{i=1}^{n} f_i(x,y),
\end{align}
where $f_i:\mathbb{R}^d\times \mathbb{R}^d\rightarrow \mathbb{R}$ denotes the smooth loss function associated with the $i$-th data sample. 
% This minimax problem formulation \eqref{opti_pro} captures many machine learning training \cite{NIPS2014_GAN} and constrained networked optimization problems \cite{unified_dis_tac}. 
% Unlike many existing works on convex-concave minimax problems \cite{unified_dis_tac,pmlr-v89-du19b}, this paper aims to solve a nonconvex case, where $f$ is strongly concave in $y$ but nonconvex in $x$.
Unlike existing studies on convex-concave minimax problems \cite{unified_dis_tac, pmlr-v89-du19b,DPD_aggre_game}, we focus on solving a nonconvex case, where \( f \) is strongly concave in \( y \) but nonconvex in \( x \). 
% In many applications, such as reinforcement learning \cite{pmlr-v80-dai18c} and communication problems \cite{Lu_2020}, optimization problems often incorporate regularization and penalty terms to ensure stability. Consequently, the corresponding minimax problems generally do not exhibit excellent convexity. 
In the application of power control in wireless communication \cite{Lu_2020}, involving a jammer in a multi-user and multi-channel interference channel transmission, the maximization jammer problem exhibits a strongly concave objective function. Then, the power control problem transforms into a nonconvex-strongly concave minimax problem. In reinforcement learning, \cite{pmlr-v80-dai18c} presents that the Bellman optimality equation can be reformulated into a nonconvex-strongly concave primal-dual optimization problem using Nesterov’s smoothing technique and the Legendre-Fenchel transformation. More applications can be found in data poisoning \cite{pmlr-v119-liu20j} and distributionally robust optimization \cite{SREDA_nips2020}. 

For nonconvex minimax problems with a finite-sum structure, stochastic first-order algorithms have garnered considerable research interest due to their scalability and efficiency. The most commonly used one is the stochastic gradient descent-ascent (SGDA) algorithm, a natural extension of stochastic gradient descent algorithm for minimax problems. In nonconvex-strongly concave (NC-SC) case, \cite{SGDA} proposed a stochastic variant of the two-timescale GDA algorithm \cite{two_GDA}. This stochastic variant incorporates unequal step sizes and has established a non-asymptotic convergence result. Another approach is the multi-step stochastic gradient-based algorithms \cite{SGD_max, SREDA_nips2020, NC_if}, which involves 
% double-loop iterations. In multi-step algorithms, the 
an inner loop seeking an approximate solution for $\max_{y\in \mathbb{R}^d} f(x,y)$ for  given $x$ and an outer loop as the inexact gradient descent step on $x$. Both of these algorithms emphasize the importance of updating $y$ more frequently than $x$ for solving minimax problems, while the two-timescale SGDA is relatively easier to implement and generally demonstrates superior empirical performance compared to the multi-step algorithms.

While stochastic gradient methods often assume uniform sampling with replacement to achieve unbiased gradient estimates and simplify theoretical analysis, practical implementations frequently deviate from this ideal scenario. Instead, various heuristics are incorporated to enhance the algorithm's performance based on empirical findings. One common and notable heuristic is the sampling-without-replacement scheme (e.g., incremental, shuffling once, and random reshuffling), known as shuffling methods, which learn from each data point in every epoch and generally own better performance than SGD \cite{why_rand_good}. In PyTorch and TensorFlow, the sampling-without-replacement random reshuffling scheme has been widely adopted in machine learning training. Although shuffling methods demonstrate excellent practical performance, the presence of biased gradients poses challenges in establishing theoretical convergence guarantees. Until recent years, the empirical and theoretical benefits of the sampling-without-replacement scheme have received considerable attention for minimization problems (e.g., \cite{RR_improv, SGD_opti_rate, why_rand_good}), however, studies on its application to minimax problems are relatively scarce. The primal-dual balancing in minimax problems introduces additional challenges due to the absence of unbiased gradient estimates, especially in the context of nonconvex minimax problems.

% , has received much less attention in the research literature. 
% Therefore, further research is needed to thoroughly investigate the convergence properties and establish rigorous theoretical guarantees for variance-reduced algorithms with the sampling-without-replacement scheme in the context of nonconvex minimax problems.

% In this paper, we will focus on designing  single-loop shuffling algorithms with practical explicit updating scheme to solve nonconvex minimax problems.
% It is worth noting that recent work \cite{SGDA_ICLR_NCPL} has developed SGDA with random reshuffling sampling (RR), which is a sampling-without-replacement scheme, for smooth nonconvex-Polyak-Łojasiewicz (PL) finite-sum minimax problems and obtained a gradient complexity of $\mathcal{O}(\kappa^{3}\sqrt{n}\epsilon^{-3})$, though improve from the SGDA for the stochastic problems $\mathcal{O}(\kappa^{3}\epsilon^{-4})$, however is significantly slower than the polytope deterministic algorithm GDA for  $\mathcal{O}(\kappa^{2}n\epsilon^{-2})$. 

In this paper, we focus on designing shuffling single-loop  algorithms with practical explicit updating schemes to solve nonconvex minimax problems. Notably, recent work \cite{SGDA_ICLR_NCPL} has developed SGDA with random reshuffling sampling (RR), one of the sampling-without-replacement schemes, for solving smooth nonconvex-Polyak-Łojasiewicz (an extension of the nonconvex-strongly concave case by including dual degeneracy) finite-sum minimax problems. This approach achieved a gradient complexity of $\mathcal{O}(\sqrt{n}\kappa^{3}\epsilon^{-3})$, better than the $\mathcal{O}(\kappa^{3}\epsilon^{-4})$ complexity of SGDA when $\epsilon\leq \mathcal{O}(1/\sqrt{n})$, specifically for solving stochastic nonconvex-strongly concave minimax problems. However, it remains significantly slower than the prototype deterministic algorithm, i.e., two-timescale GDA, which has a gradient complexity of $\mathcal{O}(n\kappa^{2}\epsilon^{-2})$.
Consequently, one natural question arises: 
\par \textit{
% Is there any possibility to improve the convergence rate of single-loop sampling-without-replacement stochastic algorithms for nonconvex minimax problems to match its deterministic variant?
% , such as variance reduction technique?
Is it possible to improve the convergence rate of single-loop sampling-without-replacement stochastic algorithms for nonconvex minimax problems to match their deterministic counterparts?
} 

% It is worth noting that these stochastic algorithms typically rely on the assumption of bounded gradient variance, and the convergence results are affected by the gradient variance.

% As a standard tool to improve the efficiency of stochastic optimization algorithms, the variance reduction technique offers an alternative choice that does not depend on the assumption of gradient variance. 

We affirmatively address this question by utilizing variance reduction techniques. As a standard tool to improve the efficiency of stochastic optimization algorithms, several existing variance-reduced stochastic algorithms have been developed to enhance convergence performance by mitigating gradient variance in various settings of minimax optimization \cite{VR_CC, GAN_VR_nips, comp_vr_finite, chen2023faster, NIPS_VR_PL, NIPS_SAPD, vr_vi_cui, SREDA_nips2020,VR_mon_general}. However, this is the first time a suitable variance reduction scheme has been considered and designed specifically to improve the performance of shuffling algorithms for nonconvex minimax problems.
The contributions of this paper are summarized as follows. 

\begin{itemize}
    \item We develop a novel single-loop uniform shuffling (encompassing incremental, shuffling once, and random reshuffling) gradient descent-ascent algorithm with variance reduction techniques for acceleration. The variance reduction step eliminates the need for assuming bounded gradient variance and enables our algorithm to converges to the stationary points of nonconvex minimax problems, without introducing the additional computational cost associated with choosing a specific batch size, as required in SGDA \cite{SGDA}.
    \item Our algorithm, with its two-timescale step sizes tailored for NC-SC finite-sum minimax problems, achieves a gradient complexity of $\mathcal{O}(n\kappa^2 \epsilon^{-2})$ for deriving an $\epsilon$-optimization-stationary point.
This complexity result not only outperforms the best-known result of $\mathcal{O}(\sqrt{n}\kappa^{3}\epsilon^{-3})$ for shuffling-type algorithms reported in \cite{SGDA_ICLR_NCPL} if $\epsilon\leq \mathcal{O}(\kappa/\sqrt{n})$, but also matches the  best-known gradient complexity achieved by the multi-step uniform sampling-with-replacement  algorithm SREDA in \cite{SREDA_nips2020}.
  \item Furthermore, our proposed algorithm has the same complexity as its deterministic prototype algorithm, the two-timescale GDA, effectively managing the expense of taking the finite-sum structure without additional costs. This also illustrates the potential of shuffling with variance reduction techniques to improve complexity for a broader range of discrete iterative update schemes, including but not limited to nonconvex minimax problems.
\end{itemize}

The notation we use in this paper is standard. We use $[n]$ to denote the set $\{1,2,\ldots,n\}$ for any positive integer $n$. Let the Euclidean space of all real vectors be equipped with the inner product $\langle x,y\rangle:=x^{\top}y$ for any vectors $x, y$ and denote the induced norm by $\|\cdot\|$. 
For a differentiable function $f$, the gradient of $f$ is denoted as $\nabla f$.

The remainder of this paper is organized as follows. In Section \ref{solver_design}, we present the problem setting and preliminary results for minimax optimization. The proposed shuffling gradient descent-ascent algorithm with variance reduction is introduced in Section \ref{pro_trans}. In Section \ref{proof_sec}, we provide a step-by-step proof of the convergence result for the proposed algorithm. We conclude with numerical experiments demonstrating the empirical effectiveness of our algorithm in Section \ref{simulation}, followed by a brief summary of the work in Section \ref{conclusion}.

% \subsection{Preliminaries}

% Consider the accelerated gradient (AG) method of Nesterov (1983) with constant step-size and momentum parameters. Given an initial point ${y}_0$, and with ${y}_{-1}=y_0$, the AG method repeats, for $k\geq 0$,
% \begin{align}\label{AG_up}
%     z_{t}&={y}_t+\beta ({y}_t-{y}_{t-1})\notag\\
%     {y}_{t+1}&=z_t-\eta \nabla f (z_t),
% \end{align}
% where $f$ is $L$-smooth and $\mu$-strongly-convex.
% Then, with constant step-size $\eta=\frac{1}{L}$ and momentum parameter $\beta=\frac{\kappa-1}{\kappa+1}$ converges at the rate
% \begin{align}\label{linear_AG}
%     \|{y}_{t+1}-y^*\|^2\leq (1-\frac{1}{\kappa})\|{y}_t-y^*\|^2.
% \end{align}
% This rate matches the tightest-known-worst-case lower bound achievable by any first-order method for smooth strongly convex optimization.
\section{Problem Setup and Preliminaries}\label{solver_design}

% \subsection{Problem description}\label{intro_pro_set}

We introduce the basic NC-SC problem setup and key concepts that will serve as the basis for our subsequent analysis.
 % We suppose the following conditions hold through this paper.
\begin{assumption}\label{f_assump}
The following assumptions on the objective function $f$ of problem \eqref{opti_pro} hold throughout the paper. 
\begin{itemize}
    \item[(i)] $f_i$ is $l$-smooth, i.e.,
    \begin{align*}
        \|\nabla f_i(x_1,y_1)\!-\!\nabla f_i(x_2,y_2)\|\!\leq \!l(\|x_1-x_2\|\!+\!\|y_1-y_2\|);
    \end{align*}
        \item[(ii)] $f$ is nonconvex in $x$ but $\mu$-strongly concave in $y$, with  the condition number $\kappa:=l/\mu\geq 1$.
    %\item[(iii)] The constrained set $\mathcal{Y}$ is nonempty, convex and compact with a diameter $D\geq 0$.
\end{itemize}
% Without loss of generality, 
% We denote the condition number of $f$ satisfies $\kappa:=l/\mu\geq 1$. 
\end{assumption}

The minimax problem \eqref{opti_pro} is equivalent to the following minimization problem
\begin{align}\label{min_opti}
    \min_{x\in \mathbb{R}^d} \left\{\Phi(x):= \max_{y\in \mathbb{R}^d} f(x,y)\right\}.
\end{align}
In the NC-SC setting, the function $\Phi$ possesses several important properties as described in the following technical lemma.
\begin{lemma}[{cf. \cite[Lemma 4.3]{SGDA}}]\label{Phi_lem}
Under Assumption \ref{f_assump}, the function $\Phi$ is $(l+\kappa l)$-smooth with $\nabla \Phi(x)=\nabla_x f(x, y^*(x))$, where $y^*(x)\in\operatorname{argmax}_{y\in \mathbb{R}^d} f(\cdot,y)$ is a singleton. Also, $y^*(\cdot)$ is $\kappa$-Lipschitz.
\end{lemma}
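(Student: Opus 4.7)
The plan is to prove the three assertions in order: uniqueness of $y^*(x)$, Lipschitz continuity of $y^*(\cdot)$, and the Danskin-type gradient formula together with the smoothness of $\Phi$.

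For the uniqueness of $y^*(x)$, I would invoke Assumption~\ref{f_assump}(ii): for any fixed $x$, the map $y \mapsto -f(x,y)$ is $\mu$-strongly convex, so its minimizer exists and is unique. This gives that $y^*(x) = \operatorname{argmax}_{y} f(x,y)$ is a singleton and in particular satisfies the first-order optimality condition $\nabla_y f(x, y^*(x)) = 0$ for every $x$.

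The core quantitative step is the $\kappa$-Lipschitz continuity of $y^*$. Given $x_1, x_2 \in \mathbb{R}^d$, I would start from the two optimality conditions $\nabla_y f(x_1, y^*(x_1)) = 0$ and $\nabla_y f(x_2, y^*(x_2)) = 0$. Strong concavity of $f(x_2, \cdot)$ implies
\begin{align*}
\mu \|y^*(x_1) - y^*(x_2)\|^2
&\leq \langle \nabla_y f(x_2, y^*(x_2)) - \nabla_y f(x_2, y^*(x_1)), \, y^*(x_2) - y^*(x_1) \rangle.
\end{align*}
Substituting $\nabla_y f(x_2, y^*(x_2)) = 0 = \nabla_y f(x_1, y^*(x_1))$ on the right-hand side, the bracket becomes $\langle \nabla_y f(x_1, y^*(x_1)) - \nabla_y f(x_2, y^*(x_1)), y^*(x_2) - y^*(x_1)\rangle$. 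I would then bound this by Cauchy--Schwarz together with Assumption~\ref{f_assump}(i), which gives $\|\nabla_y f(x_1, y^*(x_1)) - \nabla_y f(x_2, y^*(x_1))\| \leq l\|x_1 - x_2\|$. Dividing by $\|y^*(x_1) - y^*(x_2)\|$ yields $\|y^*(x_1) - y^*(x_2)\| \leq (l/\mu)\|x_1-x_2\| = \kappa \|x_1 - x_2\|$.

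Given Lipschitzness of $y^*$, the gradient formula $\nabla \Phi(x) = \nabla_x f(x, y^*(x))$ follows from Danskin's theorem: since the maximizer is unique and $f$ is continuously differentiable in $x$, the envelope function $\Phi$ is differentiable and its gradient is obtained by partial differentiation in $x$ at the maximizer. Finally, to establish the $(l + \kappa l)$-smoothness of $\Phi$, I would simply chain the smoothness of $\nabla_x f$ with the Lipschitz bound on $y^*$:
\begin{align*}
\|\nabla \Phi(x_1) - \nabla \Phi(x_2)\|
&= \|\nabla_x f(x_1, y^*(x_1)) - \nabla_x f(x_2, y^*(x_2))\| \\
&\leq l\big(\|x_1 - x_2\| + \|y^*(x_1) - y^*(x_2)\|\big) \\
&\leq l(1 + \kappa)\|x_1 - x_2\|.
\end{align*}

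The only nontrivial obstacle is the Lipschitz estimate on $y^*$; everything else is a direct invocation of Assumption~\ref{f_assump} and a standard envelope-theorem argument. Since the proof closely follows the classical reasoning used in \cite{SGDA}, the statement can be cited rather than reproved if desired.
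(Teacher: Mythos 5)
Your argument is correct and follows essentially the standard route; note that the paper itself does not reprove this lemma but cites it directly from \cite[Lemma 4.3]{SGDA}, whose proof is exactly yours (first-order optimality plus strong monotonicity for the $\kappa$-Lipschitzness of $y^*$, a Danskin/envelope argument for $\nabla\Phi(x)=\nabla_x f(x,y^*(x))$, then chaining with the $l$-smoothness of $f$ for the $(l+\kappa l)$-smoothness of $\Phi$). One slip worth fixing: your strong-concavity display has the sign reversed --- for a $\mu$-strongly concave $g$ one has $\langle \nabla g(u)-\nabla g(v),\,u-v\rangle \le -\mu\|u-v\|^2$, so as written your right-hand side is nonpositive while the left-hand side is nonnegative; the correct form is $\mu\|y^*(x_1)-y^*(x_2)\|^2 \le \langle \nabla_y f(x_2,y^*(x_1)) - \nabla_y f(x_2,y^*(x_2)),\, y^*(x_2)-y^*(x_1)\rangle$. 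The slip is immaterial to the conclusion, since after substituting the optimality conditions the Cauchy--Schwarz step only uses the magnitude of the bracket, and the bound $\|y^*(x_1)-y^*(x_2)\|\le (l/\mu)\|x_1-x_2\|=\kappa\|x_1-x_2\|$ stands.
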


% Since $f(x,\cdot)$ is concave for each $x\in \mathbb{R}^d$, the maximization problem $\max_{y\in \mathcal{Y}} f(x,y)$ can be solved efficiently and provides information about $\Phi$. However, due to the function $\Phi$ is nonconvex in $x$, it is still difficult to find the global minimum of \eqref{min_opti} in general. Therefore, the important task of the minimax problem \eqref{opti_pro} is to find the following stationary point of $\Phi$, which is a common surrogate for the global minimum in nonconvex optimization since $\Phi$ is differentiable.

Since $f(x,\cdot)$ is concave for each $x \in \mathbb{R}^d$, the maximization problem $\max_{y \in \mathbb{R}^d} f(x,y)$ can be solved efficiently, providing valuable information $y^*(x)$ for the function $\Phi$. However, because $\Phi$ is still nonconvex in $x$, a reasonable candidate for Nash equilibria or global minima in the nonconvex minimax problem \eqref{opti_pro} is to identify the stationary point of $\Phi$ using the commonly used stationarity measure $\|\nabla \Phi(x)\|$ (with differentiability by Lemma \ref{Phi_lem}).

%We define a surrogate for the global minimum of $\Phi$.

\begin{definition}\label{sta_point_def}
    The point $x$ is an $\epsilon$-optimization-stationary point of problem \eqref{opti_pro} if $\|\nabla \Phi(x)\|\leq \epsilon$ holds with $\epsilon\ge0$.
    % If $\epsilon=0$, then $x$ is a stationary point.
\end{definition}

% \begin{remark}
%     Provide some examples/explanation for the general of the problem.
% \end{remark}

% \par %For smooth and strongly convex optimization, extensive research has been conducted on the standard linear convergence guarantee of gradient descent \cite[Theorem 3.12]{convex_opti_linear}. 
% To facilitate subsequent analysis, we introduce an excellent property of gradient descent for smooth and strongly convex optimization in the following lemma.

% This section ends with a useful property of gradient descent for smooth and strongly convex optimization, stated in the following lemma, which facilitates our subsequent analysis.

% \begin{lemma}[{cf. \cite[Theorem 3.12]{convex_opti_linear}}]\label{linear_lem}
%     Suppose that the function $\phi:\mathbb{R}^d\to \mathbb{R}$ is $l$-smooth and $\mu$-strongly convex, and let $x^*:={\rm argmin}_{x\in \mathbb{R}^d} \phi(x)$. Then for any  $x\in \mathbb{R}^d$ and the update $x_+=x-\eta\nabla \phi(x)$ with $0<\eta\leq \frac{2}{l+\mu}$, the following inequality holds:
%     \begin{align*}
%         \|x_+-x^*\|\leq (1-\mu\eta)\|x-x^*\|.
%     \end{align*}
% \end{lemma}

\section{Shuffling Gradient Descent-Ascent with Variance Reduction}\label{pro_trans}
In this section, we propose a variance-reduced stochastic algorithm with the shuffling framework (Algorithm \ref{vr_rr}) for solving the NC-SC minimax problem \eqref{opti_pro}. To avoid undesired gradient variance while utilizing the practical benefits of the sampling-without-replacement scheme, the proposed stochastic algorithm incorporates two main techniques: a data shuffling step and a variance reduction step.

% \par (1) Data shuffling: 
% First, the proposed algorithm takes random or deterministic permutation/shuffling of the data $\{1,\cdots,n\}$ including incremental gradient (IG), shuffling once (SO) and random reshuffling (RR). Among these shuffling schemes, RR is the most common but theoretically elusive one, where a new permutation/shuffling is generated at the beginning of each epoch. The SO scheme is identical to RR with the exception that it shuffles the dataset only once and then reuses this random permutation in all subsequent epochs. The IG scheme is identical to SO with the exception that the initial permutation is not random but deterministic. The ordering could be arbitrary, e.g., it could be selected implicitly by the ordering the data comes in, or chosen adversarially. We pay attention to the RR scheme, and the SO and IG schemes follow as a byproduct of our analysis for the RR scheme.

First, the proposed algorithm uses a permutation/shuffling of the data $\{1, \ldots, n\}$ that can be random or deterministic, including incremental gradient (IG), shuffling once (SO), and random reshuffling (RR). Among these shuffling schemes, RR is the most common but theoretically elusive one, where a new permutation is generated at the beginning of each epoch. The SO scheme shuffles the dataset only once and then reuses this random permutation in all subsequent epochs. The IG scheme is similar to SO, except that the initial permutation is deterministic rather than random. The ordering in IG could be arbitrary, such as the order in which the data arrives or an adversarially chosen sequence. Our primary focus is on the RR scheme, with the SO and IG schemes being analyzed as byproducts of our analysis of the RR scheme.

% \par (1) Data shuffling: At the beginning of each epoch $t$, we sample indices $\pi_t^0,\cdots,\pi_t^{n-1}$ without replacement from the set $\{1,\cdots,n\}$ to generate a random permutation $\pi_t$.  and performs incremental gradient updates following the permuted order. We study three data shuffling scheme,
% \par (2) 
% In the mean time, variance-reduced gradient estimate: Following the order in the sample sequence $\pi$, we sequentially perform $n$ inner iterative primal and dual updates using the corresponding data sample. In each inner iteration $j$, the gradient estimate $d_t^j$ ($d_t^j$) is updated using a variance reduction step known as stochastic variance reduced gradient (SVRG), initially developed in \cite{SVRG_nips}. The full gradient is computed only once every $n$ inner iterations. It is worth noting that the step sizes $\eta_1$ and $\eta_2$ are not equal, allowing for a two-timescale updating of $x$ and $y$.

In the meantime, variance-reduced gradient estimation is performed. Following the order in the sample sequence $\pi$, we sequentially conduct $n$ iterative primal and dual updates using the corresponding data samples. In each iteration $j$, the gradient estimate is updated using a simple variance reduction technique similar to that introduced in \cite{SVRG_nips}. The full gradient is updated after processing all $n$ data samples. 
% Notably, the step sizes $\eta_1$ and $\eta_2$ are not equal, allowing for different timescales in updating $x$ and $y$ to stabilize the primal-dual updates.

% During each inner iteration, the variable $x$ is updated sequentially using gradient descent as follows
% \begin{align}
%     x_t^{j+1}=x_t^j-\frac{\eta_1}{n} d_t^j,
% \end{align}
% where the subscript $t$ denotes the outer iteration, the superscript $j$ denotes the $j$th inner iteration and $d_t^j$ is the gradient estimate of $x$ using the data sample with index $\pi_t^j$. 
%\par In each inner iteration $j$, the gradient estimate $d_t^j$ is updated using a variance reduction step known as stochastic variance reduced gradient (SVRG), initially developed in \cite{SVRG_nips}. The full gradient is computed once every $n$ inner iterations. A similar gradient ascent with gradient estimate $d_t^j$ is also applied to the variable $y$. It is worth noting that the step-sizes $\eta_1$ and $\eta_2$ are not equal, allowing for a two-timescale updating of $x$ and $y$.
%\par After $n$ inner iterations, the initial value of the next iteration of variable $x_{t+1}$ takes the final value of the inner iteration $x_t^n$. As for the variable $y$, it undergoes a projection operator onto the constraint set $\mathcal{Y}$ using $y_t^n$, and the initial value of the next iteration $y_{t+1}$ takes the result of the projection operator.
\begin{algorithm}
\caption{Shuffling Gradient Descent-Ascent with Variance Reduction}
	\label{vr_rr}
	\begin{algorithmic}[0]  %1±íÊ¾Ã¿¸ôÒ»ÐÐ±àºÅ	
        \State \textbf{Initialization}: $x_0, y_0\in \mathbb{R}^{d}$, step sizes $\eta_1, \eta_2>0$, number of epoches $T$, deterministic (resp. random) permutation $\pi=(\pi^0,\ldots,\pi^{n-1})$ of $[n]$ for IG (resp. SO).
        % \State  SO: Sample a random permutation $\pi=(\pi^0,\ldots,\pi^{n-1})$ of $[n]$
        \For {$t=0,1,\ldots,T$}
        % \State {\color{blue} RR: Sample a random permutation $\pi=(\pi^0,\ldots,\pi^{n-1})$ of $[n]$}
        \State Randomly permute $[n]$ to $\pi = (\pi^0, \ldots, \pi^{n-1})$ for RR
        \State $x_{t}^{0}=x_{t}$, $y_{t}^{0}=y_t$
        \State $h_t^0=\frac{1}{n}\sum_{i=1}^n \nabla_x f_i(x_t,y_t)$
        \State $d_t^0=\frac{1}{n}\sum_{i=1}^n \nabla_y f_i(x_t,y_t)$
        \For {$j=0,\ldots,n-1$}
        \State $h_{t}^j=h_t^0+\nabla_x f_{\pi^j}(x_{t}^{j},y_t^j)-\nabla_x f_{\pi^j}(x_t,y_t)$
        \State $x_{t}^{j+1}=x_{t}^{j}-\frac{\eta_1}{n} h_{t}^j$
        \State $d_{t}^{j}=d_t^0+\nabla_y f_{\pi^j}(x_t^j,y_t^j)-\nabla_y f_{\pi^j}(x_t,y_t)$
        \State $y_{t}^{j+1}=y_{t}^{j}+\frac{\eta_2}{n} d_{t}^{j}$
        \EndFor
        \State $x_{t+1}=x_{t}^{n}$, $y_{t+1}=y_{t}^{n}$
        \EndFor
	\end{algorithmic}
\end{algorithm}

% \Lingzhi{What happens if inner loop is less than n?}
% The three data shuffling schemes in Algorithm \ref{vr_rr} are closely related. The SO scheme is identical to RR with the exception that it shuffles the dataset only once and then reuses this random permutation in all subsequent epochs. Incremental Gradient (IG) algorithm, which is identical to SO, with
% the exception that the initial permutation is not random but deterministic. The ordering could be arbitrary,
% e.g., it could be selected implicitly by the ordering the data comes in, or chosen adversarially. 
% We pay most attention to the RR scheme, and the SO and IG schemes follow as a byproduct of our efforts to understand RR.
\begin{remark}
Compared to the widely used uniform sampling procedure with replacement, the shuffling procedure offers a significant advantage by avoiding cache misses during data retrieval. The efficiency of shuffling schemes stems from their ability to learn from each sample in every epoch, ensuring that no sample is missed. In contrast, the sampling-with-replacement scheme may overlook certain samples in a given epoch. Our proposed algorithm for solving minimax optimization problems retains this beneficial property of shuffling algorithms. It is a non-nested loop shuffling-friendly algorithm that does not depend on the inner solver for an accurate solution to the dual problem, requiring only an additional cost for full gradient updates every $n$ steps.

% While several studies have analyzed RR-based algorithms for minimization optimization \cite{why_rand_good,SGD_opti_rate}, few works have explored the performance of the RR sampling scheme in solving nonconvex minimax optimization.
%%In addition, many studies have shown that RR-based algorithms usually converge in fewer iterations compared to SGD-like algorithms in various minimization problems \cite{why_rand_good,SGD_opti_rate}.
%to the best of our knowledge, this study is the first to analyze the performance of the RR sampling scheme in solving nonconvex-strongly concave minimax optimization.
\end{remark}
% \begin{remark}
%     % The proposed algorithm only requires one projection operator in each epoch, while the best-known variance-reduced based algorithm SREDA \cite{SREDA_nips2020}  requires multiple projection operators in the initialization step and each epoch. The proposed algorithm significantly reduces the number of projection operators compared to SREDA. On the other hand, SREDA relies on a projected SARAH algorithm \cite{SARAH_paper} to initialize $y_0$ in the initialization step. In contrast, the proposed algorithm only requires $y_0\in \mathcal{Y}$, which is easier to initialize.
% The best-known variance-reduced algorithm, SREDA \cite{SREDA_nips2020}, requires multiple projection operators in each epoch and during initialization that relies on a projected SARAH algorithm \cite{SARAH_paper}. In contrast, our algorithm involves a much simpler initialization step and requires only one projection operator onto the constraint set per epoch, thereby significantly reducing the number of projection operations. 
%      % Numerical experiments demonstrate the efficiency of the proposed algorithm.
% \end{remark}

\section{Convergence analysis}\label{proof_sec}
%In this section, we present the theoretical analysis for the proposed Algorithm \ref{vr_rr}. %First, we will reformulate the loss function $f$ in \eqref{opti_pro} into an equivalent form. This reformulation will help simplify the convergence analysis. After reformulating the loss function, we will proceed to rewrite the update of $h_{t}^j$ ($d_{t}^j$) in the proposed Algorithm \ref{vr_rr}.
We first reformulate the loss function $f$ in \eqref{opti_pro} into an equivalent form. Let auxiliary vectors $a_1,\ldots,a_n\in \mathbb{R}^{d}$ and vectors $b_1,\ldots,b_n\in \mathbb{R}^{d}$ satisfy the conditions $\sum_{i=1}^n a_i=0$ and $\sum_{i=1}^n b_i=0$. 
% Adding these zeros to the loss function in \eqref{opti_pro}, 
Then we can reformulate the loss function $f$ into the following equivalent form 
\begin{align}\label{trans_f}
f(x,y)
% =&\frac{1}{n} \sum_{i=1}^n (f_i(x,y)+\langle a_i,x\rangle+\langle b_i,y\rangle)\notag\\
=\frac{1}{n}\sum_{i=1}^n \tilde{f}_i(x,y),
% := \tilde{f}(x,y),
\end{align}
where $\tilde{f}_i(x,y):= f_i(x,y)+\langle a_i,x\rangle+\langle b_i,y\rangle$. For the proposed Algorithm \ref{vr_rr} with any $t\in[T]$, let $a_i=\nabla_x f(x_t,y_t)-\nabla_x f_i(x_t,y_t)$ and $b_i=\nabla_y f(x_t,y_t)-\nabla_y f_i(x_t,y_t)$. Then, the updates of $h_{t}^j$ and $d_{t}^j$ can be rewritten as
\begin{center}
    $h_{t}^j=\nabla_x \tilde{f}_{\pi^j}(x_t^j,y_t^j)$,\quad
    $d_{t}^j=\nabla_y \tilde{f}_{\pi^j}(x_t^j,y_t^j)$.
\end{center}
% \begin{remark}
%     For NC-SC minimax problem, the function $\Phi$ is smooth, implying the differentiable property. Hence, the stationary point in Definition \ref{sta_point_def} is an appropriate optimality measure for the minimax problem considered in this paper. 
% \end{remark}
Also, we know that the  updating scheme of the proposed Algorithm \ref{vr_rr} can be rewritten as 
\begin{center}
$x_{t+1} = x_t -\eta_1 h_t$,\quad $y_{t+1} = y_t +\eta_2 d_t$,
\end{center}
where $h_t:=\frac{1}{n} \sum_{j=0}^{n-1}h_t^j=\frac{1}{n} \sum_{j=0}^{n-1} \nabla_x \tilde{f}_j(x_t^j, y_t^j)$ and $d_t:=\frac{1}{n} \sum_{j=0}^{n-1}d_t^j=\frac{1}{n} \sum_{j=0}^{n-1} \nabla_y \tilde{f}_j(x_t^j, y_t^j)$.

In the following subsections, to derive the convergence properties of Algorithm \ref{vr_rr},  we introduce a potential function $\mathcal{P}_{\lambda}:\mathbb{R}^d\times \mathbb{R}^d\rightarrow \mathbb{R}$ with given $\lambda>0$ satisfying
$$\mathcal{P}_{\lambda}(x,y):=\lambda (\Phi(x)-\Phi^*)+\Phi(x)-f(x,y),$$
which is commonly used for convergence analysis in nonconvex-strongly concave minimax optimization; see \cite{NIPS_VR_PL,SGDA_ICLR_NCPL}.
% It has a tight connection to primal and dual updates of the proposed algorithm. 
Since  for any $(x,y)$ it has $\Phi^*\leq \Phi(x)$ and $f(x,y)\leq \Phi(x)$, the potential function $\mathcal{P}_{\lambda}$ is lower bounded by $0$.
% for any $(x,y)$.
We first establish the basic descent property of the potential function, and then control the bias in the shuffling algorithm, which is the backward per-epoch deviation. With these results, we are finally able to provide the convergence rate analysis of the proposed algorithm.

\subsection{Basic Descent Properties}

Under the sampling-without-replacement scheme, we denote the backward per-epoch deviations  with $z_t:=[x_t;y_t]$ and $z_t^j:=[x_t^j;y_t^j]$ at each epoch $t$ as
\begin{equation*}
  B_t := \sum_{j=0}^{n-1} \|z_t - z_t^j\|^2=\sum_{j=0}^{n-1} (\|x_t - x_t^j\|^2+\|y_t - y_t^j\|^2).
\end{equation*}
Utilizing the structure of the function $\mathcal{P}_{\lambda}$, we establish the difference in function value for $x_{t}$ to $x_{t+1}$ in Algorithm \ref{vr_rr}.

\begin{lemma}\label{Phi_dif_lem}
Under Assumption \ref{f_assump}, it follows that
\begin{align}\label{Phi_ineq}
     &\mathcal{P}_{\lambda}(x_{t+1},y_{t+1})-\mathcal{P}_{\lambda}(x_t,y_t)\notag\\
     \leq &-\frac{(\lambda+1)\eta_1}{2}\|\nabla \Phi(x_t)\|^2\! +2(\lambda+1)\eta_1 l\kappa  [\Phi(x_t)-\!f(x_t,y_t)] \notag\\
    &+\frac{\eta_1}{2}\|\nabla_x f(x_t,y_t)\|^2-\frac{\eta_2}{2}\|\nabla_y f(x_t,y_t)\|^2
    % +\frac{\eta_2l^2}{2n}B_t
    \notag\\
    &+\left(\lambda \eta_1+\frac{\eta_1+\eta_2}{2}\right)\frac{l^2}{n}B_t\notag\\
    &+\left(\frac{\eta_1}{2}(1+\eta_1 l)-\frac{(\lambda+1)\eta_1}{2}\cdot\left(1-(\kappa l+l)\eta_1 \right)\right)\left\|h_t\right\|^2\notag\\
    &-\frac{\eta_2}{2}(1-\eta_2 l)\|d_t\|^2
\end{align}
\end{lemma}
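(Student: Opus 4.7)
The plan is to split the one-step change $\mathcal{P}_{\lambda}(x_{t+1},y_{t+1})-\mathcal{P}_{\lambda}(x_t,y_t)$ into the $\Phi$-part $(\lambda+1)[\Phi(x_{t+1})-\Phi(x_t)]$ and the $f$-part $-[f(x_{t+1},y_{t+1})-f(x_t,y_t)]$, and to bound each by a standard smoothness inequality. For the $\Phi$-part I would invoke the $(l+\kappa l)$-smoothness guaranteed by Lemma \ref{Phi_lem} together with the rewritten update $x_{t+1}=x_t-\eta_1 h_t$, producing a $-(\lambda+1)\eta_1\langle\nabla\Phi(x_t),h_t\rangle$ cross-term plus a $\tfrac{(\lambda+1)(\kappa l+l)\eta_1^2}{2}\|h_t\|^2$ quadratic. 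For the $f$-part I would use the $l$-smoothness of $f$ jointly in $(x,y)$ along the displacement $(-\eta_1 h_t,\eta_2 d_t)$, which yields the symmetric pair of cross-terms $\eta_1\langle\nabla_x f(x_t,y_t),h_t\rangle$ and $-\eta_2\langle\nabla_y f(x_t,y_t),d_t\rangle$ together with quadratic terms in $\|h_t\|^2$ and $\|d_t\|^2$.

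Next I would convert each cross-term into squared norms via the polarization identity $\pm\langle a,b\rangle=\pm\tfrac12(\|a\|^2+\|b\|^2-\|a-b\|^2)$, selecting the signs so that the descent terms $-\tfrac{(\lambda+1)\eta_1}{2}\|\nabla\Phi(x_t)\|^2$ and $-\tfrac{\eta_2}{2}\|\nabla_y f(x_t,y_t)\|^2$, together with the auxiliary $+\tfrac{\eta_1}{2}\|\nabla_x f(x_t,y_t)\|^2$, appear with exactly the coefficients demanded in \eqref{Phi_ineq}, while the residual biases $\|\nabla\Phi(x_t)-h_t\|^2$ and $\|\nabla_y f(x_t,y_t)-d_t\|^2$ remain as positive terms to be controlled. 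The $\|h_t\|^2$ and $\|d_t\|^2$ contributions coming from both polarization and smoothness would then consolidate into the final two quadratic coefficients in the statement.

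The substance of the argument is in bounding the two residual bias terms by $B_t$ and the primal-dual gap via the reformulation \eqref{trans_f}. That reformulation gives $h_t-\nabla_x f(x_t,y_t)=\tfrac{1}{n}\sum_{j=0}^{n-1}[\nabla_x f_{\pi^j}(x_t^j,y_t^j)-\nabla_x f_{\pi^j}(x_t,y_t)]$, and analogously for $d_t-\nabla_y f(x_t,y_t)$, so Jensen's inequality combined with Assumption \ref{f_assump}(i) bounds each of $\|h_t-\nabla_x f(x_t,y_t)\|^2$ and $\|d_t-\nabla_y f(x_t,y_t)\|^2$ by a constant multiple of $\tfrac{l^2}{n}B_t$. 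For the primal bias I would additionally split $\nabla\Phi(x_t)-h_t=[\nabla_x f(x_t,y^*(x_t))-\nabla_x f(x_t,y_t)]+[\nabla_x f(x_t,y_t)-h_t]$ using Lemma \ref{Phi_lem}, apply $l$-smoothness to the first bracket, and convert $\|y^*(x_t)-y_t\|^2$ into $\tfrac{2}{\mu}[\Phi(x_t)-f(x_t,y_t)]$ via the standard strong-concavity identity $f(x_t,y_t)\le f(x_t,y^*(x_t))-\tfrac{\mu}{2}\|y_t-y^*(x_t)\|^2$, which holds because $\nabla_y f(x_t,y^*(x_t))=0$. This produces the $2(\lambda+1)\eta_1 l\kappa$ prefactor on $\Phi(x_t)-f(x_t,y_t)$ and supplies the remainder of the $B_t$ coefficient.

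The main obstacle I expect is bookkeeping: the Young-type splittings and polarization identities must be calibrated so that the final coefficients of $\|h_t\|^2$ and $\|d_t\|^2$ collapse to exactly $\tfrac{\eta_1}{2}(1+\eta_1 l)-\tfrac{(\lambda+1)\eta_1}{2}(1-(\kappa l+l)\eta_1)$ and $-\tfrac{\eta_2}{2}(1-\eta_2 l)$, while the $B_t$ prefactor consolidates to $\lambda\eta_1+\tfrac{\eta_1+\eta_2}{2}$. The conceptual content underneath this bookkeeping, however, is that the variance-reduced reformulation \eqref{trans_f} is precisely what makes the primal bias $\|\nabla\Phi(x_t)-h_t\|^2$ controllable purely by the backward per-epoch deviation $B_t$ plus the primal-dual gap $\Phi(x_t)-f(x_t,y_t)$, with no bounded-variance assumption on $\nabla f_i$ entering anywhere in the argument.
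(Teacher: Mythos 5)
Your proposal is correct and follows essentially the same route as the paper's own proof: the same split of $\mathcal{P}_\lambda$ into the $(\lambda+1)[\Phi(x_{t+1})-\Phi(x_t)]$ and $f$-difference parts, the same smoothness-plus-polarization treatment of the cross terms, the same decomposition of $\nabla\Phi(x_t)-h_t$ through $\nabla_x f(x_t,y_t)$ with strong concavity converting $\|y^*(x_t)-y_t\|^2$ into the primal-dual gap, and the same $\tfrac{l^2}{n}B_t$ bounds on the two variance-reduction residuals. The coefficients you anticipate for $\|h_t\|^2$, $\|d_t\|^2$, and $B_t$ are exactly those that emerge from the paper's bookkeeping.
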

\begin{proof}
% \par By the definition, $\mathcal{P}_{\lambda}$ satisfies
% \begin{align}\label{V_dif}
%     \mathcal{P}_{\lambda}(x_{t+1},y_{t+1})-\mathcal{P}_{\lambda}(x_t,y_t)=(\lambda+1)[\Phi(x_{t+1})-\Phi(x_t)]+f(x_t,y_t)-f(x_{t+1},y_{t+1}).
% \end{align}
\par Since $\Phi$ is $(l+\kappa l)$-smooth by Lemma \ref{Phi_lem},
% and $\kappa=l/\mu\geq 1$,
% Then
\begin{align}\label{phi_diff}
&\Phi(x_{t+1})-\Phi(x_{t})\notag\\
\leq\ & \nabla \Phi(x_{t})^\top(x_{t+1}-x_{t}) + \frac{\kappa l+l}{2} \|x_{t+1}-x_{t}\|^2\notag\\
    =\ &-\eta_1\nabla \Phi(x_t)^\top  h_t+ \frac{\kappa l+l}{2} \|\eta_1 h_t\|^2\notag\\
    =\ &- \frac{\eta_1}{2} \left(\|\nabla \Phi(x_t)\|^2+\|h_t\|^2-\|\nabla \Phi(x_t)-h_t\|^2\right)\notag\\
    &+\frac{(\kappa l+l)\eta_1^2}{2} \|h_t\|^2\notag\\
    =&- \frac{\eta_1}{2}\|\nabla \Phi(x_t)\|^2+\frac{\eta_1}{2} \|\nabla \Phi(x_t)-h_t\|^2\notag\\
    &-\frac{\eta_1}{2}(1-(\kappa l+l)\eta_1 )\|h_t\|^2.
\end{align}
On the other hand, since  $\nabla \Phi(x_t)=\nabla_x f(x_t,y^*(x_t))$ and $h_t=\frac{1}{n}\sum_{j=0}^{n-1} \nabla_x \tilde{f}_j(x_t^j,y_t^j)=\frac{1}{n}\sum_{j=0}^{n-1}\nabla_x f_j(x_t^j,y_t^j)$ hold, the  term $\left\|\nabla \Phi(x_t)-h_t\right\|^2$ satisfies
\begin{align*}
    &\left\|\nabla \Phi(x_t)-h_t\right\|^2\\
    % =\ &\left\| \frac{1}{n}\sum_{j=0}^{n-1} \nabla f_j(x_t,y^*(x_t))-\frac{1}{n}\sum_{j=0}^{n-1}\nabla f_j (x_t^j,y_t^j) \right\|^2\\
    =\ &\left\| \frac{1}{n}\sum_{j=0}^{n-1} \nabla_x f_j(x_t,y^*(x_t))-\frac{1}{n}\sum_{j=0}^{n-1} \nabla_x f_j(x_t,y_t)\right.\\
    \quad &\left. +\frac{1}{n}\sum_{j=0}^{n-1} \nabla_x f_j(x_t,y_t)-\frac{1}{n}\sum_{j=0}^{n-1}\nabla_x f_j (x_t^j,y_t^j) \right\|^2\\
    % \leq & 2\Big\|\frac{1}{n}\sum_{j=0}^{n-1} \nabla f_j(x_t,y^*(x_t))-\frac{1}{n}\sum_{j=0}^{n-1} \nabla f_j(x_t,y_t)\Big\|^2\\
    % &+2\Big\|\frac{1}{n}\sum_{j=0}^{n-1} \nabla f_j(x_t,y_t)-\frac{1}{n}\sum_{j=0}^{n-1}\nabla f_j (x_t^j,y_t) \Big\|^2\\
    % \leq & 2\frac{1}{n} \sum_{j=0}^{n-1}\Big\| \nabla f_j(x_t,y^*(x_t))-\nabla f_j(x_t,y_t)\Big\|^2\\
    % &+2\frac{1}{n}\sum_{j=0}^{n-1}\Big\| \nabla f_j(x_t,y_t)-\nabla f_j (x_t^j,y_t) \Big\|^2\\
    \leq\ & 2l^2  \|y^*(x_t)-y_t\|^2+2  \|\nabla_x f(x_t,y_t)-h_t\|^2\\
    \leq\ &4l\kappa  [\Phi(x_t)-f(x_t,y_t)]+2  \|\nabla_x f(x_t,y_t)-h_t\|^2,
\end{align*}
where the last inequality is from the $\mu$-strongly concavity of $f(x,\cdot)$ (i.e. $l^2  \|y^*(x_t)-y_t\|^2\leq 2l\kappa [\Phi(x_t)-f(x_t,y_t)]$).
Substituting the above inequality into \eqref{phi_diff} yields
\begin{align}\label{phi_diff-2}
    &\Phi(x_{t+1})- \Phi(x_t)\notag\\
    \leq\, &- \frac{\eta_1}{2}\|\nabla \Phi(x_t)\|^2-\frac{\eta_1}{2}(1-(\kappa l+l)\eta_1 )\|h_t\|^2\notag\\
    & +2\eta_1 l\kappa  [\Phi(x_t)-f(x_t,y_t)]+\eta_1 \|h_t-\nabla_x f(x_t,y_t)\|^2.
\end{align}   
Next, applying the $l$-smoothness of $f$ yields an upper bound of $f(x_t,y_t)-f(x_{t+1},y_{t+1})$ that
\begin{align*}
& f(x_t,y_t)-f(x_{t+1},y_{t+1}) \\
\leq\ & -\langle\nabla_x f(x_t,y_t), x_{t+1}-x_t\rangle-\langle\nabla_y f(x_t,y_t), y_{t+1}-y_t\rangle\\
&+\frac{l}{2}\|x_{t+1}-x_t\|^2+\frac{l}{2}\|y_{t+1}-y_t\|^2 \\
% \leq \ & \eta_1\langle\nabla_x f(x_t,y_t),  h_t\rangle-\eta_2 \langle\nabla_y f(x_t,y_t), d_t\rangle\\
% &+\frac{\eta_1^2 l}{2}\|h_t\|^2+\frac{\eta_2^2 l}{2}\|d_t\|^2 \\
=\ &\frac{\eta_1}{2}\|\nabla_x f(x_t,y_t)\|^2-\frac{\eta_1}{2}\|h_t-\nabla_x f(x_t,y_t)\|^2\\
&+\frac{\eta_1}{2}(1+\eta_1 l)\|h_t\|^2-\frac{\eta_2}{2}\|\nabla_y f(x_t,y_t)\|^2\\
&+\frac{\eta_2}{2}\|d_t-\nabla_y f(x_t,y_t)\|^2-\frac{\eta_2}{2}(1-\eta_2 l)\|d_t\|^2.
% \leq\ &\frac{\eta_1}{2}\|\nabla_x f(x_t,y_t)\|^2-\frac{\eta_1l^2}{2n}B_t\\
% &+\frac{\eta_1}{2}(1+\eta_1 l)\|h_t\|^2-\frac{\eta_2}{2}\|\nabla_y f(x_t,y_t)\|^2\\
% &+\frac{\eta_2l^2}{2n}B_t-\frac{\eta_2}{2}(1-\eta_2 l)\|d_t\|^2.
\end{align*}
In addition, by the $l$-smoothness of $f_i$, we know $\|h_t-\nabla_x f(x_t,y_t)\|^2\leq \frac{l^2}{n}B_t$ and $\|d_t-\nabla_y f(x_t,y_t)\|^2\leq \frac{l^2}{n}B_t$.
Combining the above results and \eqref{phi_diff-2}, we know that
\begin{align*}
     &\mathcal{P}_{\lambda}(x_{t+1},y_{t+1})-\mathcal{P}_{\lambda}(x_t,y_t)\\
     =&(\lambda+1)[\Phi(x_{t+1})-\Phi(x_t)]+f(x_t,y_t)-f(x_{t+1},y_{t+1})\\
     \leq&-\frac{\eta_1(\lambda \!+\! 1)}{2}\|\nabla \Phi(x_t)\|^2+2(\lambda \! +\! 1)\eta_1 l\kappa  [\Phi(x_t)-\! f(x_t,y_t)] \\
    &+\frac{\eta_1}{2}\|\nabla_x f(x_t,y_t)\|^2-\frac{\eta_2}{2}\|\nabla_y f(x_t,y_t)\|^2\\
    &+\left(\lambda\eta_1+\frac{\eta_1+\eta_2}{2}\right)\frac{l^2}{n}B_t\\
    &+\left(\frac{\eta_1}{2}(1+\eta_1 l)-\frac{(\lambda+1)\eta_1}{2}\cdot\left(1-(\kappa l+l)\eta_1 \right)\right)\left\|h_t\right\|^2\\
    &-\frac{\eta_2}{2}(1-\eta_2 l)\|d_t\|^2,
\end{align*}
which completes the proof. 
\end{proof}
\subsection{Controlling Bias in Shuffling Algorithms}
In this part, we will further analyze the results derived in Lemma \ref{Phi_dif_lem} and mainly focus on handling the biased term. The following lemma provides the upper bounds for the backward per-epoch deviation $B_t$ in the sampling-without-replacement scheme.
\begin{lemma}\label{upp_bound_B}
Suppose Assumption \ref{f_assump} holds. If the step sizes satisfy $\eta_1^2+\eta_2^2\leq \frac{1}{4l^2}$, then
% the backward per-epoch deviations satisfy
    \begin{align*}
    % \label{A_up}
        \mathbb{E}_t[B_t]
        % :=\ &\sum_{j=0}^{n-1}\|z_t-z_t^j\|^2 \notag\\
        \leq\ & 4n(\eta_1^2\|\nabla_x f(x_t,y_t)\|^2+\eta_2^2\|\nabla_y f(x_t,y_t)\|^2).
    \end{align*}
\end{lemma}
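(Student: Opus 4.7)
The plan is to bound $B_t$ by unrolling the inner updates and then closing a self-consistent recursion using the step-size condition. First, writing $x_t^j - x_t = -\frac{\eta_1}{n}\sum_{k=0}^{j-1} h_t^k$ and $y_t^j - y_t = \frac{\eta_2}{n}\sum_{k=0}^{j-1} d_t^k$, a Cauchy--Schwarz step gives $\|x_t - x_t^j\|^2 \le \frac{\eta_1^2 j}{n^2}\sum_{k=0}^{j-1}\|h_t^k\|^2$, and summing over $j\in\{0,\dots,n-1\}$ (swapping the order of summation so that $\sum_{j=k+1}^{n-1} j \le n^2/2$) yields
\begin{equation*}
\sum_{j=0}^{n-1}\|x_t - x_t^j\|^2 \;\le\; \frac{\eta_1^2}{2}\sum_{k=0}^{n-1}\|h_t^k\|^2,
\end{equation*}
and analogously with $\eta_2$ and $d_t^k$ for the dual block. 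Added together, this produces a first bound $B_t \le \tfrac{\eta_1^2}{2}\sum_k\|h_t^k\|^2 + \tfrac{\eta_2^2}{2}\sum_k\|d_t^k\|^2$.

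The next step is to bound $\|h_t^k\|^2$ in a way that exposes the anchor gradient $\nabla_x f(x_t, y_t)$ and reabsorbs a term proportional to $B_t$. Using the variance-reduction identity established after \eqref{trans_f}, namely $h_t^k - \nabla_x f(x_t, y_t) = \nabla_x f_{\pi^k}(x_t^k, y_t^k) - \nabla_x f_{\pi^k}(x_t, y_t)$, combined with the $l$-smoothness of $f_{\pi^k}$ from Assumption \ref{f_assump}(i) and $\|a+b\|^2 \le 2\|a\|^2 + 2\|b\|^2$ (applied twice), I would obtain
\begin{equation*}
\|h_t^k\|^2 \;\le\; 4l^2\bigl(\|x_t - x_t^k\|^2 + \|y_t - y_t^k\|^2\bigr) + 2\|\nabla_x f(x_t, y_t)\|^2,
\end{equation*}
and similarly for $\|d_t^k\|^2$ with $\nabla_y f(x_t, y_t)$. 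Summing over $k$ turns the first piece into $4l^2 B_t$ and the second into $2n\|\nabla_x f(x_t, y_t)\|^2$ (resp.\ the $y$-analogue).

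Plugging these into the earlier bound gives the self-referential inequality
\begin{equation*}
B_t \;\le\; 2l^2(\eta_1^2+\eta_2^2)\,B_t + n\bigl(\eta_1^2\|\nabla_x f(x_t, y_t)\|^2 + \eta_2^2\|\nabla_y f(x_t, y_t)\|^2\bigr),
\end{equation*}
and under the standing step-size condition $\eta_1^2+\eta_2^2 \le \tfrac{1}{4l^2}$ the prefactor on $B_t$ on the right is at most $\tfrac12$, so one can move it to the left and absorb to finish. Taking $\mathbb{E}_t[\cdot]$ (needed only because the permutation $\pi$ may be random, as in RR) preserves the deterministic per-path bound and yields the claimed inequality, up to the universal constant that the statement absorbs into the factor $4n$.

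The main obstacle I anticipate is the bookkeeping with the two constants at two levels: the $\tfrac12$ loss from Cauchy--Schwarz on the partial sums and the factor $4l^2$ produced by the smoothness-plus-splitting step need to be balanced against the step-size hypothesis, so the proof must keep track of these precisely to justify the absorption. Everything else (the variance-reduction identity for $h_t^k, d_t^k$ and the smoothness of each $f_i$) is straightforward, but the combinatorial summation $\sum_j j \sum_{k<j}$ and the final recursive resolution are the delicate points.
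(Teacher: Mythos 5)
Your proposal is correct and follows essentially the same route as the paper's proof: unroll the inner updates, apply Cauchy--Schwarz to the partial sums, use the variance-reduction identity together with $l$-smoothness to expose $\nabla_x f(x_t,y_t)$ and $\nabla_y f(x_t,y_t)$ plus a term proportional to $B_t$, and absorb the latter via the step-size condition $2l^2(\eta_1^2+\eta_2^2)\le\tfrac12$. Your more careful handling of the factor $j$ in the Cauchy--Schwarz step (using $\sum_j j\le n^2/2$ rather than bounding $j\le n$) even yields the slightly sharper constant $2n$ in place of the paper's $4n$, which of course still implies the stated bound.
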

\begin{proof}
\par By the shuffling gradient descent step in Algorithm \ref{vr_rr}, we know for any $j\in\mathbb{N}$ that
\begin{align}\label{x_sub_i}
    &\|x_t^0-x_t^j\|^2\notag\\
    =\ &\frac{\eta_1^2}{n^2}\left\|\sum_{i=0}^{j-1} 
    \big(\nabla_x f_{\pi^i}(x_t^i,y_t^i)-\nabla_x f_{\pi^i}(x_t,y_t)+\nabla_x f(x_t,y_t)\big)\right\|^2\notag\\
    \leq\ & \frac{\eta_1^2}{n} \left(2l^2 \sum_{i=0}^{n-1}\|z_t^i-z_t\|^2+2  \sum_{i=0}^{n-1}\|\nabla_x f(x_t,y_t)\|^2\right)\notag\\
    =\ & \frac{2\eta_1^2l^2}{n}  B_t+2\eta_1^2 \|\nabla_x f(x_t,y_t)\|^2.
\end{align}
% Summing up the above inequality from $j=0$ to $j=n-1$ yields
% \begin{align}\label{eq:Arecur}
%     A_t=\sum_{j=0}^{n-1} \|x_t^0-x_t^j\|^2
%     \leq 2 \eta_1^2 l^2 A_t+2n\eta_1^2 \|\nabla_x f(x_t,y_t)\|^2.
% \end{align}
% %< \frac{1}{16l}
% Since the step size $\eta_1$ satisfies $\eta_1 \leq\frac{1}{2l}$, it together with \eqref{eq:Arecur} indicates
% \begin{align*}
%     A_t
%     &\leq \frac{n}{l^2}\|\nabla_x f(x_t,y_t)\|^2.\\
% \end{align*}
Following the similar proof, we have 
\begin{align}\label{y_sub_i}
    \|y_t^0-y_t^j\|^2\leq \frac{2\eta_2^2l^2}{n}B_t+2\eta_2^2 \|\nabla_y f(x_t,y_t)\|^2.
\end{align}
Summing up \eqref{x_sub_i} and \eqref{y_sub_i},
\begin{align}
    \|z_t^0-z_t^j\|^2
    \leq &\frac{2(\eta_1^2+\eta_2^2)l^2}{n}  B_t+2\eta_1^2 \|\nabla_x f(x_t,y_t)\|^2\notag\\
    &+2\eta_2^2 \|\nabla_y f(x_t,y_t)\|^2
\end{align}
Summing the above inequality over $j=0,\ldots, n-1$ and taking conditional expectation yields
\begin{align*}
    \mathbb{E}_t[B_t]\leq\ &2n(\eta_1^2\|\nabla_x f(x_t,y_t)\|^2+\eta_2^2\|\nabla_y f(x_t,y_t)\|^2)\\
    & + 2(\eta_1^2+\eta_2^2)l^2 \mathbb{E}_t[B_t].
\end{align*}
Thus, when $\eta_1^2+\eta_2^2\leq \frac{1}{4l^2}$, it follows that
\begin{align*}
    \mathbb{E}_t\left[B_t\right]\leq 4n(\eta_1^2\|\nabla_x f(x_t,y_t)\|^2+\eta_2^2\|\nabla_y f(x_t,y_t)\|^2).
\end{align*}
The proof is complete.
\end{proof}

\subsection{Main Convergence Theorem}
% \par 
Before presenting the convergence theorem, we derive a more concise bound on the per-epoch change of $\mathcal{P}_{\lambda}$ by combining results from Lemmas \ref{Phi_dif_lem} and \ref{upp_bound_B}. This recurrence inequality is essential for  convergence analysis. %For simplicity, we denote $\mathcal{P}_{\lambda}(t):=\mathcal{P}_{\lambda}(x_{t},y_{t})$ in the following lemma.

\begin{lemma} \label{rec_lem}
Suppose Assumption \ref{f_assump} holds. If the step sizes satisfy 
\begin{align}\label{step_range1}
    \eta_1\leq \frac{\lambda}{[(\lambda+1)(\kappa+1)+1]l}, \ \eta_2\leq \frac{1}{l}, \ \eta_1^2+\eta_2^2\leq \frac{1}{4l^2}.
\end{align}
 % with $C_0:=\eta_2-\left((\lambda+\frac{1}{2})\eta_1 l^2+\frac{\eta_2l^2}{2}\right)8\eta_2^2\geq 0$, 
Then for any integer $T>0$, the proposed Algorithm \ref{vr_rr} satisfies
\begin{align*}
    &\mathbb{E}_t\left[\mathcal{P}_{\lambda}(x_{t+1},y_{t+1})\right]- \mathcal{P}_{\lambda}(x_{t},y_{t})\\
    \leq & -C_1\|\nabla \Phi(x_t)\|^2 -C_2(\Phi(x_t) -f(x_t,y_t)),
\end{align*}
where $C_1:=\frac{\eta_1(\lambda-1)}{2}-8(\lambda\eta_1+\frac{\eta_1+\eta_2}{2})\eta_1^2l^2$ and $C_2:=\eta_2\mu-8(\lambda\eta_1+\frac{\eta_1+\eta_2}{2})(\mu\eta_2^2+2l\kappa\eta_1^2)l^2-2(\lambda+2)\eta_1 l\kappa$.
\end{lemma}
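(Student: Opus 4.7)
The plan is to combine the one-epoch descent bound from Lemma \ref{Phi_dif_lem} with the backward-deviation bound from Lemma \ref{upp_bound_B}, and then reduce every remaining term to either $\|\nabla\Phi(x_t)\|^2$ or $\Phi(x_t)-f(x_t,y_t)$. The step-size conditions \eqref{step_range1} are calibrated precisely so that this reduction collapses to the coefficients $-C_1$ and $-C_2$.

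First I would use the step-size restrictions to annihilate the $\|h_t\|^2$ and $\|d_t\|^2$ terms in \eqref{Phi_ineq}. Rewriting the bracket in front of $\|h_t\|^2$ as $\tfrac{\eta_1}{2}[-\lambda+\eta_1 l(1+(\lambda+1)(\kappa+1))]$ shows it is non-positive exactly when $\eta_1\leq \lambda/[((\lambda+1)(\kappa+1)+1)l]$; likewise $\eta_2\leq 1/l$ forces $-\tfrac{\eta_2}{2}(1-\eta_2 l)\leq 0$. After dropping these two non-positive contributions, I would take the conditional expectation $\mathbb{E}_t[\cdot]$ and invoke Lemma \ref{upp_bound_B} (which uses $\eta_1^2+\eta_2^2\leq 1/(4l^2)$) to substitute $\mathbb{E}_t[B_t]\leq 4n(\eta_1^2\|\nabla_x f(x_t,y_t)\|^2+\eta_2^2\|\nabla_y f(x_t,y_t)\|^2)$. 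After this, the bound involves only $\|\nabla\Phi(x_t)\|^2$, $\Phi(x_t)-f(x_t,y_t)$, $\|\nabla_x f(x_t,y_t)\|^2$, and $\|\nabla_y f(x_t,y_t)\|^2$ with explicit coefficients.

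Next I would eliminate the two raw gradient norms. For $\|\nabla_x f(x_t,y_t)\|^2$, the argument already executed inside the proof of Lemma \ref{Phi_dif_lem}---insert $\pm\nabla\Phi(x_t)=\pm\nabla_x f(x_t,y^*(x_t))$, apply $\|u+v\|^2\leq 2\|u\|^2+2\|v\|^2$, the $l$-Lipschitzness of $\nabla_x f$ in $y$, and $\|y_t-y^*(x_t)\|^2\leq \tfrac{2}{\mu}(\Phi(x_t)-f(x_t,y_t))$ from strong concavity---gives $\|\nabla_x f(x_t,y_t)\|^2\leq 4l\kappa(\Phi(x_t)-f(x_t,y_t))+2\|\nabla\Phi(x_t)\|^2$. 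For $\|\nabla_y f(x_t,y_t)\|^2$, I would first collect its total signed coefficient $-\tfrac{\eta_2}{2}+4(\lambda\eta_1+\tfrac{\eta_1+\eta_2}{2})l^2\eta_2^2$, which the step-size restrictions render non-positive, and then apply the Polyak--\L ojasiewicz inequality $\|\nabla_y f(x_t,y_t)\|^2\geq 2\mu(\Phi(x_t)-f(x_t,y_t))$ that follows from $\mu$-strong concavity of $f(x_t,\cdot)$; multiplication by a non-positive coefficient flips the inequality and produces a non-positive multiple of $\Phi(x_t)-f(x_t,y_t)$.

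Finally, summing all contributions produces coefficient $-\tfrac{(\lambda-1)\eta_1}{2}+8(\lambda\eta_1+\tfrac{\eta_1+\eta_2}{2})l^2\eta_1^2=-C_1$ in front of $\|\nabla\Phi(x_t)\|^2$ and coefficient $-\eta_2\mu+2(\lambda+2)\eta_1 l\kappa+8(\lambda\eta_1+\tfrac{\eta_1+\eta_2}{2})l^2(\mu\eta_2^2+2l\kappa\eta_1^2)=-C_2$ in front of $\Phi(x_t)-f(x_t,y_t)$, completing the proof. I expect the main obstacle to be the bookkeeping for the $\|\nabla_y f(x_t,y_t)\|^2$ term: one has to verify both that the combined coefficient is genuinely non-positive so that PL fires in the correct direction, and that the resulting $\mu$-weighted contributions align exactly with the $\mu\eta_2^2+2l\kappa\eta_1^2$ structure inside $C_2$---this matching is what forces the particular factors in \eqref{step_range1} rather than more generous bounds based solely on Lipschitzness of $\nabla_y f$.
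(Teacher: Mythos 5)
Your proposal follows essentially the same route as the paper's own proof: drop the $\|h_t\|^2$ and $\|d_t\|^2$ terms via the step-size conditions, take $\mathbb{E}_t$ and invoke Lemma \ref{upp_bound_B}, bound $\|\nabla_x f(x_t,y_t)\|^2$ by $2\|\nabla\Phi(x_t)\|^2+4l\kappa(\Phi(x_t)-f(x_t,y_t))$, apply the strong-concavity (PL) inequality to the $\|\nabla_y f(x_t,y_t)\|^2$ term, and collect coefficients into $C_1$ and $C_2$; your final bookkeeping matches the paper's exactly. Your explicit observation that the combined coefficient of $\|\nabla_y f(x_t,y_t)\|^2$ must be non-positive for the PL step to go the right way is a point the paper leaves implicit (it is guaranteed only once the concrete parameters of Theorem \ref{con_theo} are in force, not by \eqref{step_range1} alone for arbitrary $\lambda$), so you are, if anything, slightly more careful than the original argument.
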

\begin{proof}
If $\eta_2\leq \frac{1}{l}$ and $\eta_1\leq \frac{\lambda}{l[(\lambda+1)(\kappa+1)+1]}$, then, the last two terms on the right-hand side of \eqref{Phi_ineq} are negative and can be eliminated. Then, taking the conditional expectation to Lemma \ref{Phi_dif_lem} and applying Lemma \ref{upp_bound_B}, we know that
\begin{align}\label{eq:suff-des-key}
    &\mathbb{E}_t\left[\mathcal{P}_{\lambda}(x_{t+1},y_{t+1})\right]-\mathcal{P}_{\lambda}(x_t,y_t)\notag\\
    % \leq &-(\lambda+1)\frac{\eta_1}{2}\|\nabla \Phi(x_t)\|^2+(\lambda+1)2\eta_1 l\kappa  [\Phi(x_t)-f(x_t,y_t)]\\
    % &+\frac{\eta_1}{2}\|\nabla_x f(x_t,y_t)\|^2-\frac{\eta_2}{2}\|\nabla_y f(x_t,y_t)\|^2+(\lambda+\frac{1}{2})\frac{\eta_1l^2}{n} B_t+\frac{\eta_2l^2}{2n}B_t\\
    % &-\frac{\eta_2}{2}(1-\eta_2 l)\|d_t\|^2-(\lambda+1)\frac{\eta_1}{2}\left(1-(\kappa l+l)\eta_1 \right)\left\|h_t\right\|^2+\frac{\eta_1}{2}(1+\eta_1 l)\|h_t\|^2\\
    % \leq &-\frac{\eta_1(\lambda+1)}{2}\|\nabla \Phi(x_t)\|^2+2(\lambda+1)\eta_1 l\kappa  [\Phi(x_t)-f(x_t,y_t)]\\
    % &+\frac{\eta_1}{2}\|\nabla_x f(x_t,y_t)\|^2-\frac{\eta_2}{2}\|\nabla_y f(x_t,y_t)\|^2+\left(\lambda+\frac{\eta_1+\eta_2}{2}\right)\cdot 4l^2(\eta_1^2\|\nabla_x f(x_t,y_t)\|^2+\eta_2^2\|\nabla_y f(x_t,y_t)\|^2)\\
    % &+\frac{\eta_1}{2}(1+\eta_1 l-(\lambda+1)\cdot(1-(\kappa l+l)\eta_1 ))\|h_t\|^2-\frac{\eta_2}{2}(1-\eta_2 l)\|d_t\|^2\\
    \leq&-\frac{\eta_1(\lambda+1)}{2}\|\nabla \Phi(x_t)\|^2+2(\lambda+1)\eta_1 l\kappa  [\Phi(x_t)-\!f(x_t,y_t)]\notag\\
    &+\left(\frac{\eta_1}{2}+4\xi \eta_1^2l^2\right)\|\nabla_x f(x_t,y_t)\|^2\notag\\
    &-\left(\frac{\eta_2}{2}-4\xi\eta_2^2l^2\right)\|\nabla_y f(x_t,y_t)\|^2
\end{align}
where $\xi:=\lambda\eta_1+\frac{\eta_1+\eta_2}{2}$.
\par Since $f(x,\cdot)$ is $\mu$-strongly concave, we have
\begin{align*}
    -\|\nabla_y f(x_t,y_t)\|^2\leq -2\mu (\Phi(x_t)-f(x_t,y_t)).
\end{align*}
This together with \eqref{eq:suff-des-key} and the inequality 
\begin{align*}
    \|\nabla_x f(x_t,y_t)\|^2
    &\leq 2\|\nabla \Phi(x_t)\|^2\! +\! 2\|\nabla \Phi(x_t)-\!\nabla_x f(x_t,y_t)\|^2 \\
    &\leq 2\|\nabla \Phi(x_t)\|^2\! +\! 2l^2\|y^*(x_t)-y_t\|^2 \\
    &\leq  2\|\nabla \Phi(x_t)\|^2\!+\!4l\kappa [\Phi(x_t)-\!f(x_t,y_t)]
\end{align*}
implies that  
% the equation \eqref{eq:suff-des-key} indicates
\begin{align*}
    &\mathbb{E}_t\left[\mathcal{P}_{\lambda}(x_{t+1},y_{t+1})\right]-\mathcal{P}_{\lambda}(x_t,y_t)\\
    % \leq&-(\lambda+1)\frac{\eta_1}{2}\|\nabla \Phi(x_t)\|^2+(\lambda+1)2\eta_1 l\kappa  [\Phi(x_t)-f(x_t,y_t)]\\
    % &+\left(\frac{\eta_1}{2}+\left((\lambda+\frac{1}{2})\eta_1 l^2+\frac{\eta_2l^2}{2}\right)4\eta_1^2\right) (2\|\nabla \Phi(x_t)\|^2+4l\kappa [\Phi(x_t)-f(x_t,y_t)])\\
    % &-\left(\frac{\eta_2}{2}-\left((\lambda+\frac{1}{2})\eta_1 l^2+\frac{\eta_2l^2}{2}\right)4\eta_2^2\right)2\mu (\Phi(x_t)-f(x_t,y_t))\\
    \leq&-\left(\frac{\eta_1(\lambda+1)}{2}-\left(\eta_1+8\xi \eta_1^2l^2\right) \right)\|\nabla \Phi(x_t)\|^2\\
    &+2(\lambda+1)\eta_1 l\kappa  [\Phi(x_t)-f(x_t,y_t)]\\
    &+\left(\frac{\eta_1}{2}+4\xi \eta_1^2l^2\right)\cdot 4l\kappa [\Phi(x_t)-f(x_t,y_t)]\\
    &-\underbrace{\left(\eta_2-8\xi \eta_2^2 l^2\right)}_{C_0}\cdot \mu (\Phi(x_t)-f(x_t,y_t))\\
    =&-\underbrace{\left(\frac{\eta_1(\lambda-1)}{2}-8\xi \eta_1^2l^2\right) }_{C_1}\|\nabla \Phi(x_t)\|^2\\
    &-\underbrace{\left(\eta_2\mu-8\xi(\mu\eta_2^2+2l\kappa\eta_1^2)l^2-2(\lambda+2)\eta_1 l\kappa\right)}_{C_2} \\
    &\quad (\Phi(x_t)-f(x_t,y_t)).
\end{align*}
% where $C_0\geq 0$.
The desired results are derived.
\end{proof}

% We are now ready to present the convergence result for the proposed Algorithm \ref{vr_rr}. We select appropriate step sizes and parameter $\lambda$ to ensure that not only the $\eta_1$ and $\eta_2$ satisfy the step size conditions \eqref{step_range1}, but also that the coefficient constants $C_1$ and $C_2$ are positive.
Now, we are ready to present the convergence result for the proposed Algorithm \ref{vr_rr}. We select appropriate step sizes and the parameter $\lambda$ to ensure that $\eta_1$ and $\eta_2$ not only satisfy the step size conditions \eqref{step_range1}, but also that the coefficient constants $C_1$ and $C_2$ are positive.

\begin{theorem} \label{con_theo}
Suppose Assumption \ref{f_assump} holds. If
% \begin{center}
$\lambda=4$, the step sizes $\eta_2\leq \frac{1}{8l}$ and $\eta_1=\frac{\eta_2}{r}$ with $r\geq 14\kappa^2$, 
% \end{center}
then for any integer $T>0$, the proposed Algorithm \ref{vr_rr} achieves
\begin{align*}
    \frac{1}{T+1}\sum_{t=0}^T\mathbb{E}\left[\|\nabla \Phi(x_t)\|^2\right]\leq \frac{\mathcal{P}_{\lambda}(x_0,y_0)}{(T+1)\eta_1}=\mathcal{O}\left(\frac{r l}{T+1}\right).
\end{align*}    
\end{theorem}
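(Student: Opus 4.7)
The plan is to treat Lemma \ref{rec_lem} as a one-step descent inequality for the potential $\mathcal{P}_{\lambda}$, verify that the parameter choices $\lambda=4$, $\eta_2\leq 1/(8l)$ and $\eta_1=\eta_2/r$ with $r\geq 14\kappa^2$ force the coefficient $C_1$ to be at least $\eta_1$ and the coefficient $C_2$ to be nonnegative, and then telescope. Once both constants are pinned down, Lemma \ref{rec_lem} yields
\begin{equation*}
\mathbb{E}_t\!\left[\mathcal{P}_{\lambda}(x_{t+1},y_{t+1})\right]\leq \mathcal{P}_{\lambda}(x_t,y_t)-\eta_1\|\nabla\Phi(x_t)\|^2,
\end{equation*}
after which taking total expectation, summing over $t=0,\ldots,T$ and using $\mathcal{P}_{\lambda}\geq 0$ produces $\eta_1\sum_{t=0}^T\mathbb{E}[\|\nabla\Phi(x_t)\|^2]\leq \mathcal{P}_{\lambda}(x_0,y_0)$, which is exactly the claimed bound; the order $\mathcal{O}(rl/(T+1))$ then follows from $\eta_1=\eta_2/r=\Theta(1/(rl))$.

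Before invoking Lemma \ref{rec_lem}, I would confirm that the step-size conditions \eqref{step_range1} hold with $\lambda=4$: the condition $\eta_2\leq 1/l$ is immediate; the condition $\eta_1^2+\eta_2^2\leq 2\eta_2^2\leq 1/(32l^2)\leq 1/(4l^2)$ follows since $\eta_1\leq\eta_2\leq 1/(8l)$; and the bound $\eta_1\leq 4/[(5\kappa+6)l]$ is easily met via the much stronger $\eta_1\leq 1/(112\kappa^2 l)$ forced by $r\geq 14\kappa^2$. The verification $C_1\geq\eta_1$ reduces, after substituting $\lambda=4$, to $\xi\eta_1 l^2\leq 1/16$ where $\xi:=4\eta_1+(\eta_1+\eta_2)/2\leq\eta_2$; this is immediate from $\eta_1\eta_2 l^2\leq\eta_2^2 l^2\leq 1/64$.

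The more delicate part is $C_2\geq 0$. The negative contributions to $C_2$ are the variance-reduction term $8\xi(\mu\eta_2^2+2l\kappa\eta_1^2)l^2$ and the primal-dual coupling bias $2(\lambda+2)\eta_1 l\kappa=12\eta_1 l\kappa$. For the coupling-bias term, the two-timescale ratio $r\geq 14\kappa^2$ is calibrated precisely so that $12\eta_1 l\kappa\leq(6/7)\eta_2\mu$ via $\mu=l/\kappa$, leaving a remainder of at least $\eta_2\mu/7$. The variance-reduction term can be bounded by $\mathcal{O}(\mu\eta_2^3l^2)$ using $\eta_1\leq\eta_2/(14\kappa^2)$ together with $\xi\leq\eta_2$, and this is absorbed by $\eta_2\mu/7$ since $\eta_2^2l^2\leq 1/64$. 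These two absorption inequalities are the crux of the argument.

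The principal obstacle is precisely the simultaneous tightness of these absorption inequalities. The coupling-bias term $12\eta_1 l\kappa$ forces a two-timescale separation of order $\eta_1/\eta_2=\mathcal{O}(\kappa^{-2})$, while the variance-reduction residual requires $\eta_2 l$ to be a sufficiently small absolute constant; the specific numerical thresholds $r\geq 14\kappa^2$ and $\eta_2\leq 1/(8l)$ are the minimal values that close both inequalities at once. Consequently the main work of the proof is careful bookkeeping of these constants, with the remaining steps (telescoping and invoking $\mathcal{P}_{\lambda}\geq 0$) being essentially routine.
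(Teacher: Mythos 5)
Your proposal is correct and follows essentially the same route as the paper: verify the step-size conditions \eqref{step_range1} for $\lambda=4$, show $C_1\geq\eta_1$ and $C_2\geq 0$ by the same absorption arguments (the paper additionally tracks $C_2>\mu\eta_2/26$ before discarding the nonnegative $\Phi(x_t)-f(x_t,y_t)$ term, but this makes no difference to the conclusion), and then telescope using $\mathcal{P}_{\lambda}\geq 0$. The numerical bookkeeping matches the paper's up to minor differences in how the constants are grouped.
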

\begin{proof}
Since $\eta_2\leq \frac{1}{8l}$, one has $\eta_1=\frac{\eta_2}{r}\leq \frac{1}{112 l \kappa^2}$. Then with $\kappa\geq 1$, the inequalities \eqref{step_range1} hold with $\lambda=4$. 
% Since $C_0 >C_2/\mu $, it is enough to show $C_2 > 0$ to prove that $C_0 > 0$. 
% Applying $\lambda=4$, $\kappa\geq 1$, and $\eta_2/\eta_1=r$,
In the following part, we need to check $C_1,C_2>0$. \par First, applying $\lambda=4$, $\kappa\geq 1$, and $\eta_2/\eta_1=r\ge 14\kappa^2$, we derive that
    \begin{align*}
        \frac{C_1}{\eta_1}
        =\ &\frac{\lambda-1}{2}-8\left(\lambda\eta_1+\frac{\eta_1+\eta_2}{2}\right)\eta_1l^2\\
        % =&\frac{3}{2}-8(\frac{9}{2}+\frac{r}{2})\eta_1^2l^2\\
        =\ &\frac{3}{2}-8\left(\frac{9}{2}+\frac{r}{2}\right)\frac{\eta_2^2}{r^2}l^2\\
        %\geq&\frac{3}{2}-(\frac{9}{2}+\frac{r}{2})8\frac{\frac{1}{64l^2}}{r^2}l^2\\
        \geq\ &\frac{3}{2}-\left(\frac{9}{2}+\frac{r}{2}\right)\frac{1}{8r^2}\\
        \geq\ & \frac{3}{2}-\left(\frac{9}{2}+\frac{14}{2}\right)\frac{1}{8\cdot14^2}>1.
    \end{align*}
    Then, using a similar derivation, we have
    \begin{align*}
        \frac{C_2}{\mu\eta_2}
        %=&1-\frac{8l^2}{\eta_2}(\frac{9}{2}\eta_1+\frac{\eta_2}{2})(\eta_2^2+2\kappa^2\eta_1^2)-\frac{12\kappa^2}{r}\\
        =\ &1-8\left(\lambda\eta_1+\frac{\eta_1+\eta_2}{2}\right)\left(\eta_2+\frac{2l\kappa\eta_1^2}{\mu\eta_2}\right)l^2\\
        &-\frac{2(\lambda+2)\eta_1 l\kappa}{\mu\eta_2}\\
        =\ & 1-8\left(\frac{9}{2r}+\frac{1}{2}\right)\left(1+\frac{2\kappa^2}{r^2}\right)\eta_2^2l^2-\frac{12\kappa^2}{r}\\
        %\geq& 1-8l^2\frac{1}{64l^2}(\frac{9}{2}\frac{1}{14\kappa^2}+\frac{1}{2})(1+2\frac{1}{14*14*\kappa^2})-\frac{12}{14}\\
        \geq\ &1-\frac{1}{8}\left(\frac{9}{28\kappa^2}+\frac{1}{2}\right)\left(1+\frac{1}{14\cdot7\kappa^2}\right)-\frac{6}{7}\\
        \geq\ &\frac{1}{7}-\frac{1}{8}\left(\frac{9}{28}+\frac{1}{2}\right)\left(1+\frac{1}{14\cdot7}\right)> \frac{1}{26}.
    \end{align*}
    Hence, $C_1>\eta_1$ and $C_2>\frac{\mu \eta_2}{26}=\frac{\eta_1 r \mu}{26}=\frac{14 \eta_1 l\kappa}{26}>\frac{\eta_1 l \kappa}{2}$. Then, the potential function satisfies
    \begin{align}\label{P_lam_in}
     &\mathbb{E}_t\left[\mathcal{P}_{\lambda}(x_{t+1},y_{t+1})\right]-\mathcal{P}_{\lambda}(x_t,y_t)\notag\\
     \leq &-\eta_1 \|\nabla \Phi(x_t)\|^2-\frac{\eta_1 l \kappa}{2}(\Phi(x_t)-f(x_t,y_t)).
    \end{align}
    By taking expectation to both sides of \eqref{P_lam_in} and $\Phi(x)\geq f(x,y)$ for any $(x,y)$, it follows that
    \begin{align*}
        \mathbb{E}\left[\mathcal{P}_{\lambda}(x_{t+1},y_{t+1})-\mathcal{P}_{\lambda}(x_t,y_t)\right]\leq -\eta_1 \mathbb{E}\left[\|\nabla \Phi(x_t)\|^2\right].
    \end{align*}
    % Rearranging the terms and t
    Taking the average over $t=0,\ldots, T$ and rearranging the terms, then we have
    \begin{align*}
        \frac{1}{T+1} \sum_{t=0}^T\mathbb{E}\left[\|\nabla \Phi(x_t)\|^2\right]\leq \frac{\mathcal{P}_{\lambda}(x_0,y_0)}{(T+1)\eta_1},
    \end{align*}
    % such that
    % \begin{align*}
    %     \frac{1}{T+1}\sum_{t=0}^T\|\nabla \Phi(x_t)\|^2\leq \frac{\mathcal{P}_{\lambda}(x_0,y_0)}{(T+1)\eta_1}.
    % \end{align*}
    which is the desired result.
\end{proof}
\begin{figure*}[t]
	\centering
	\subfigure[Game-stationarity measure $\|\nabla f(x_t,\theta_t)\|$]{
		\includegraphics[width=8cm]{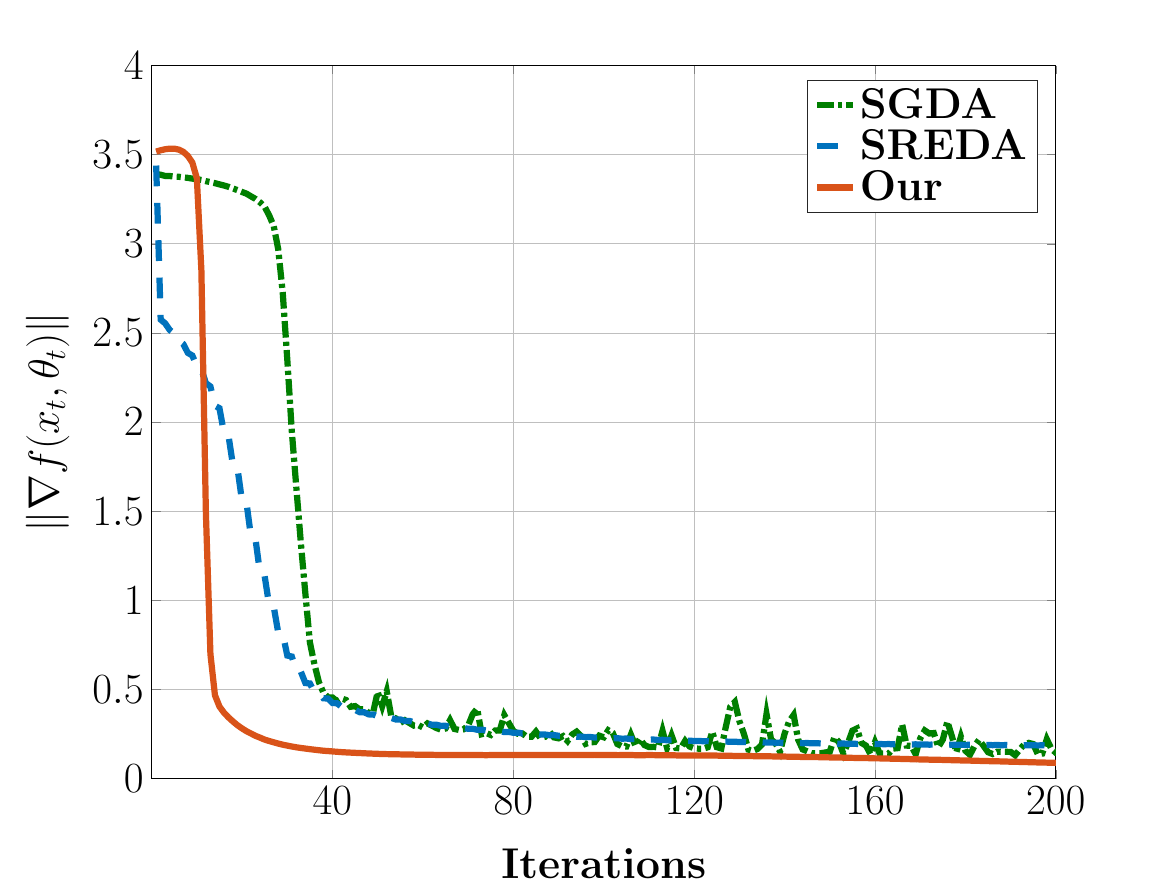}
        \label{poison_grad_fig}
	}\qquad
	\subfigure[Training accuracy]{
		\includegraphics[width=8cm]{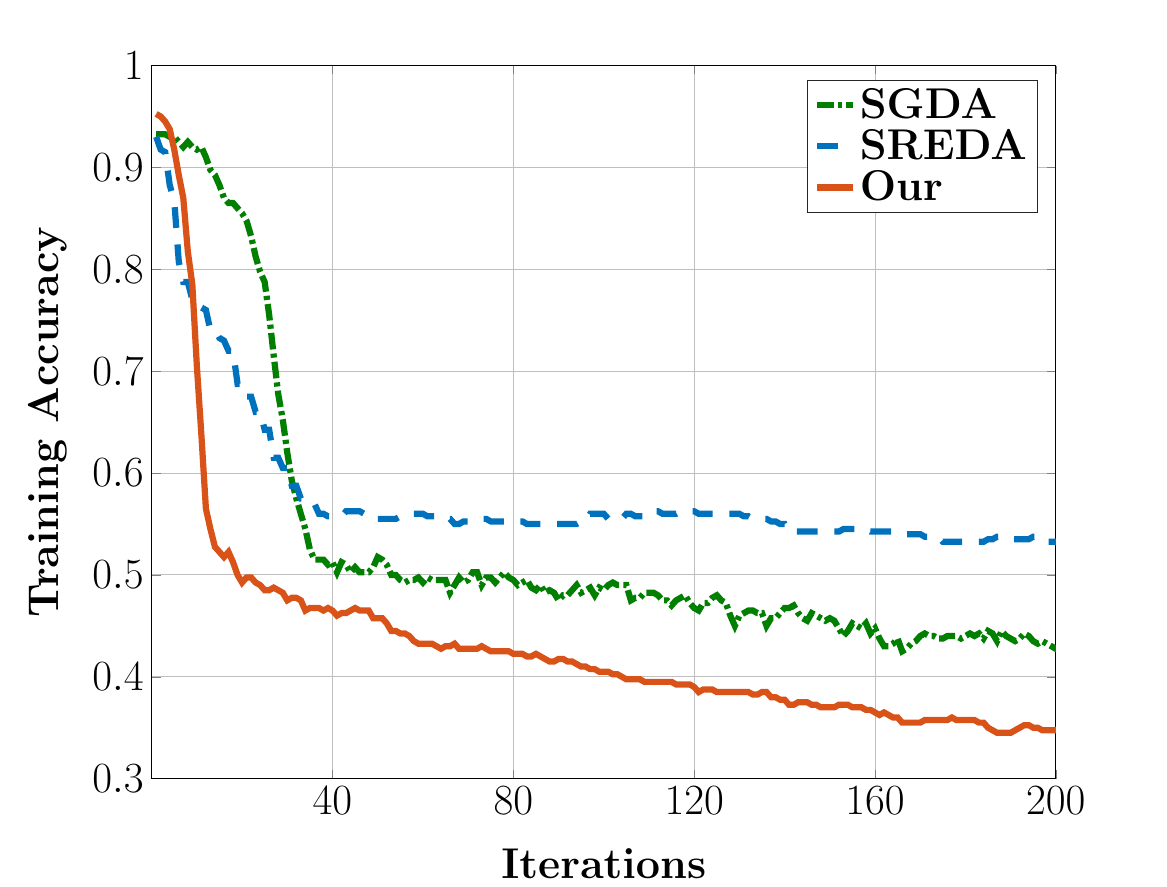}
        \label{accu_fig}
	}
	\caption{Iterative performance of SGDA, SREDA and Algorithm \ref{vr_rr} in data poisoning.}
 \label{poison_fig}
\end{figure*}

The following corollary directly follows from Theorem \ref{con_theo}.

\begin{corollary}
Suppose assumptions in Theorem \ref{con_theo} are satisfied with the step sizes 
$\eta_1=\mathcal{O}(\frac{1}{\kappa^2l})$ and $\eta_2=\mathcal{O}(\frac{1}{l})$, then  Algorithm \ref{vr_rr} achieves $\epsilon$-stationary points with $\mathcal{O}(\kappa^2\epsilon^{-2})$ iterations and $\mathcal{O}(n\kappa^2\epsilon^{-2})$ gradient oracles. 
\end{corollary}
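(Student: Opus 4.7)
The plan is to derive both the iteration complexity and the gradient oracle complexity directly from the recurrence guaranteed by Theorem \ref{con_theo}. First, I would substitute the prescribed step sizes $\eta_1=\mathcal{O}(1/(\kappa^2 l))$ and $\eta_2=\mathcal{O}(1/l)$ (which correspond to the choices $\lambda=4$ and $r=\Theta(\kappa^2)$ used in Theorem \ref{con_theo}) into the bound
\[
\frac{1}{T+1}\sum_{t=0}^T\mathbb{E}\bigl[\|\nabla \Phi(x_t)\|^2\bigr]\leq \frac{\mathcal{P}_{\lambda}(x_0,y_0)}{(T+1)\eta_1}.
\]
Since $1/\eta_1=\mathcal{O}(\kappa^2 l)$ and $\mathcal{P}_{\lambda}(x_0,y_0)$ is a finite constant depending only on the initialization (using $\Phi^*\le\Phi(x_0)$ and $f(x_0,y_0)\le\Phi(x_0)$), this gives the explicit rate $\mathcal{O}(\kappa^2 l/(T+1))$.

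Next, I would solve for the smallest $T$ that makes the right-hand side at most $\epsilon^2$. Requiring $\mathcal{P}_{\lambda}(x_0,y_0)/((T+1)\eta_1)\le \epsilon^2$ yields $T+1=\mathcal{O}(\kappa^2 \epsilon^{-2})$, which immediately identifies an index $t^\star\in\{0,\dots,T\}$ for which $\mathbb{E}[\|\nabla \Phi(x_{t^\star})\|^2]\le\epsilon^2$, i.e., an $\epsilon$-optimization-stationary point in the sense of Definition \ref{sta_point_def}.

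Finally, I would translate the epoch count into the gradient oracle complexity. Examining Algorithm \ref{vr_rr}, each epoch performs one full gradient pass (computing $h_t^0$ and $d_t^0$ uses $n$ component gradients of $f_i$, reusable for the anchor-point evaluations $\nabla f_{\pi^j}(x_t,y_t)$ in the inner loop) plus $n$ component-gradient evaluations at the shifted iterates $(x_t^j,y_t^j)$, for a total of $\mathcal{O}(n)$ calls per epoch. Multiplying by the $\mathcal{O}(\kappa^2\epsilon^{-2})$ epoch count yields the claimed $\mathcal{O}(n\kappa^2\epsilon^{-2})$ gradient oracle complexity. No genuine obstacle is expected here; the corollary is a direct bookkeeping consequence of Theorem \ref{con_theo} and the per-epoch cost analysis of Algorithm \ref{vr_rr}.
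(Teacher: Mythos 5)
Your proposal is correct and follows exactly the route the paper intends: the paper states the corollary "directly follows" from Theorem \ref{con_theo} without writing out the details, and your substitution of $1/\eta_1=\mathcal{O}(\kappa^2 l)$ into the averaged bound, the solve for $T+1=\mathcal{O}(\kappa^2\epsilon^{-2})$, and the $\mathcal{O}(n)$ per-epoch oracle count are precisely the omitted bookkeeping. No gaps.
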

%{\color{blue} This implies that the number of iterations to achieve an $\epsilon$-stationary point is $\mathcal{O}(\frac{\kappa^2 l+\kappa l^2 D^2}{\epsilon^2})$.  
 %\par The gradient complexity is $\mathcal{O}(\frac{n(\kappa^2 l+\kappa l^2 D^2)}{\epsilon^2})$.}
% \par The number of projection operator is same as the iteration.

\begin{remark}
    According to the step size setting in Theorem \ref{con_theo}, the ratio of step sizes $\eta_2/\eta_1$ needs to be $\Theta(\kappa^2)$. This requirement arises due to the non-symmetric nature of the NC-SC minimax optimization, which employs a two-timescale scheme to balance primal and dual updates. This approach is easier to implement than multi-step stochastic algorithms. For instance, the SREDA algorithm \cite{SREDA_nips2020} requires a step size $\eta_1$ proportional to the accuracy $\epsilon$. As a result, if the accuracy is small, the step size can also be very small.
\end{remark}
\begin{remark}
    % For NC-SC minimax problems, the uniform sampling SGDA algorithm in \cite{SGDA} achieves a convergence rate of $\mathcal{O}(\frac{1}{T+1})+\frac{\kappa \sigma^2}{M}$, where $\sigma$ is the bounded gradient variance and $M=\Theta(\max\{1,\kappa\sigma^2 \epsilon^{-2}\})$ is the necessary batch size for the convergence property. It is seen that SGDA converges to a neighborhood of stationary points. Whereas, our proposed Algorithm \ref{vr_rr} achieves convergence exactly to the stationary points without the bounded gradient variance $\sigma$.
    % On the other hand, the SGDA with RR in \cite{SGDA_ICLR_NCPL} achieves $\epsilon$-stationary points with $\mathcal{O}(\kappa^{3}\sqrt{n} \epsilon^{-3})$ gradient oracles. Whereas, the proposed algorithm requires $\mathcal{O}(n\kappa^2\epsilon^{-2})$ gradient oracles, which outperforms  \cite{SGDA_ICLR_NCPL} when $\epsilon$ is small.
    % For NC-SC minimax problems, the uniform sampling SGDA algorithm in \cite{SGDA} achieves a convergence rate of $\mathcal{O}(\frac{1}{T+1})+\frac{\kappa \sigma^2}{M}$, where $M = \Theta(\max\{1, \kappa \sigma^2 \epsilon^{-2}\})$ is the necessary batch size for the desired convergence property. It leads to a gradient complexity of $\mathcal{O}(\kappa^3\epsilon^{-4})$ for the SGDA to find an $\epsilon$-stationary point, whereas our proposed Algorithm \ref{vr_rr} owns a better complexity result without the variance term.
    
    For NC-SC minimax problems, the uniform sampling SGDA algorithm in \cite{SGDA} achieves a convergence rate of $\mathcal{O}\left(\frac{1}{T+1}\right) + \frac{\kappa \sigma^2}{M}$, where $\sigma$ is the upper bound of gradient variance and $M = \Theta(\max\{1, \kappa \sigma^2 \epsilon^{-2}\})$ is the necessary batch size for the desired convergence property. %This results in a gradient complexity of $\mathcal{O}(\kappa^3\epsilon^{-4})$ for SGDA to find an $\epsilon$-stationary point. 
    In contrast, our proposed Algorithm \ref{vr_rr} achieves a convergence rate without the variance term. Additionally, the SGDA with RR in \cite{SGDA_ICLR_NCPL} achieves $\epsilon$-stationary points within $\mathcal{O}(\sqrt{n}\kappa^3  \epsilon^{-3})$ gradient oracles. In comparison, our proposed algorithm requires $\mathcal{O}(n \kappa^2 \epsilon^{-2})$ gradient oracles, outperforming \cite{SGDA_ICLR_NCPL} when $\epsilon\leq \mathcal{O}(\kappa/\sqrt{n})$.
    % $\sigma$ is the bounded gradient variance and 
\end{remark}

% {\color{blue}
% \begin{remark}
%     The SREDA algorithm \cite{SREDA_nips2020} applying to finite-sum optimization achieves $1504\hat{\epsilon}$- accuracy stationary points. By setting $1504\hat{\epsilon}= \epsilon$, we have $\hat{\epsilon}= \epsilon/1504$. The gradient complexity of SREDA is rewritten as $\mathcal{O}(\kappa^2\sqrt{n}\hat{\epsilon}^{-2})=\mathcal{O}(\kappa^2 \sqrt{n}\epsilon^{-2}1504^2)$. Comparing this with the gradient complexity $\mathcal{O}(n\kappa^2\epsilon^{-2})$ of the proposed algorithm, we can observe that if $n<1504^4$, the proposed algorithm requires fewer gradient oracles.
%     %In addition, if we choose $\eta_1=\mathcal{O}(\sqrt{\epsilon})$ (like the work [A Unified Convergence Analysis for Shuffling-Type Gradient Methods. Collary 1]), the iteration can be improved to $\frac{1}{\epsilon^{-3/2}}$.
% \end{remark}}
\section{Numerical Experiments}\label{simulation}
% In this section, we apply the proposed algorithm to data poisoning and distributionally robust optimization (DRO) to demonstrate its practical efficacy. Data poisoning is an adversarial attack that attackers try
% to manipulate the training dataset to break the prediction accuracy. DRO serves as a fundamental problem formulation for numerous signal processing and control applications, such as the power control and transceiver design problem \cite{Lu_2020}. We compare the proposed Algorithm \ref{vr_rr} with the popular SGDA algorithm and the recent SREDA algorithm in \cite{SREDA_nips2020}. 

In this section, we demonstrate the practical efficacy of the proposed algorithm by applying it to data poisoning and distributionally robust optimization (DRO). Data poisoning is an adversarial attack where attackers manipulate the training dataset to degrade prediction accuracy. DRO is a fundamental problem formulation used in numerous control and signal processing applications, such as power control and transceiver design problems \cite{Lu_2020}. We compare the performance of the random reshuffled Algorithm \ref{vr_rr} with the widely-used SGDA algorithm \cite{SGDA} and the SREDA algorithm presented in \cite{SREDA_nips2020}. In all experiments, the parameters of the
compared algorithms are set as specified in their original literature.

% The parameters of the SREDA algorithm are set according to the values specified in \cite{SREDA_nips2020}. 

\subsection{Data Poisoning against Logistic Regression}

%Data poisoning is anadversarial attack that the attacker tries to manipulate the training dataset. The goal of the attacker is to corrupt the training dataset so that predictions on the dataset will be modified in the testing dataset. 
% Let $\mathcal{D}=\{z_i,t_i\}_{i=1}^n$ denote the training dataset, among which $n'\ll n$ samples are corrupted by a perturbation vector $x$, leading
% to poisoned training data $z_i+x$ towards breaking the training process and thus the prediction accuracy.
% Following the setup in \cite{xu2024derivativefree,pmlr-v119-liu20j}, we generate a dataset that contains $k=1000$ samples $\left\{z_i, t_i\right\}_{i=1}^k$, where $z_i \in \mathbb{R}^{100}$ are sampled from $\mathcal{N}(\mathbf{0}, \mathbf{I})$. Letting $\nu_i \in \mathcal{N}(0,10^{-3})$, we set 
Let $\mathcal{D} = \{z_i, t_i\}_{i=1}^n$ denote the training dataset, where $n' \ll n$ samples are corrupted by a perturbation vector $x$, resulting in poisoned training data $z_i + x$, which disrupts the training process and reduces prediction accuracy. Following the setup in \cite{xu2024derivativefree,pmlr-v119-liu20j}, we generate a dataset containing $n = 1000$ samples $\{z_i, t_i\}_{i=1}^n$, where $z_i \in \mathbb{R}^{100}$ are sampled from $\mathcal{N}(\mathbf{0}, \mathbf{I})$. With $\nu_i \sim \mathcal{N}(0, 10^{-3})$, we set
% $t_i=1$ if $1/(1+e^{-z_i^T \theta^*+\nu_i})>0.5$ and $t_i=0$ otherwise,
\[
t_i = \begin{cases} 
1, & \text{if} \; 1/(1+e^{-z_i^T \theta^*+\nu_i}) > 0.5, \\
0, & \text{otherwise}.
\end{cases}
\]
% Here we choose $\theta^*$ as the base model parameters. The dataset is randomly splited into training dataset $\mathcal{D}_{\text{train}}$ and testing dataset $\mathcal{D}_{\text{test}}$. In the training dataset $\mathcal{D}_{\text{train}}=\mathcal{D}_{\operatorname{tr}, 1} \cup \mathcal{D}_{\operatorname{tr}, 2}$, where $\mathcal{D}_{\operatorname{tr}, 1}$ represents the poisoned subset of the training dataset.
% This problem then can be formulated as below:
Here, we choose $\theta^*$ as the base model parameters. The dataset is randomly split into a training dataset $\mathcal{D}_{\text{train}}$ and a testing dataset $\mathcal{D}_{\text{test}}$. The training dataset is further divided into $\mathcal{D}_{\operatorname{tr}, 1}$ and $\mathcal{D}_{\operatorname{tr}, 2}$, where $\mathcal{D}_{\operatorname{tr}, 1}$ (resp. $\mathcal{D}_{\operatorname{tr}, 2}$) represents the poisoned (resp. unpoisoned) subset of the training dataset. The problem can then be formulated as follows:
\begin{align*}
% \label{data_poi_pro}
    \underset{\|x\|_{\infty} \leq \epsilon}{\max} \underset{\theta}{\min}\, f(x, \theta;\mathcal{D}_{\text{train}})
    % :=F_{\operatorname{tr}}(x, \theta; \mathcal{D}_{\text{train}}).
\end{align*}
% where $x \in \mathbb{R}^{100}$ and $\theta \in \mathbb{R}^{100}$ are optimization variables. 
% $F_{\operatorname{tr}}(x, \theta;\mathcal{D}_{\text{train}})$ denotes the training loss over model parameters $\theta$ at the presence of data poison $x$. To be specific, the loss function 
where $f(x, \theta ; \mathcal{D}_{\text{train}}):=F(x, \theta ; \mathcal{D}_{\operatorname{tr}, 1})+F(0, \theta ; \mathcal{D}_{\operatorname{tr}, 2})$ with 
\begin{align*}
F(x, \theta;\mathcal{D})
=&-\frac{1}{|\mathcal{D}|} \sum_{(z_i,t_i) \in \mathcal{D}}[t_i \log (l(x,\theta;z_i))\\
&+(1-t_i) \log (1-l(x, \theta;z_i))]
\end{align*}
% \begin{align*}
%     h(x, \boldsymbol{\theta} ; \mathcal{D})=&-\frac{1}{|\mathcal{D}|} \sum_{\left(z_i, t_i\right) \in \mathcal{D}}\big[t_i \log \left(h\left(x, \boldsymbol{\theta} ; z_i\right)\right)\\
%     &+\left(1-t_i\right) \log \left(1-h\left(x, \boldsymbol{\theta} ; z_i\right)\right)\big],
% \end{align*}
and $l(x, \theta ; z_i)=1/(1+e^{-(z_i+x)^T \theta})$ taking a logistic regression model. %\eqref{data_poi_pro} can also be written in the form $\min _x \max _{\boldsymbol{\theta}}-f(x, \boldsymbol{\theta})$.

% We set the poisoning ratio $|\mathcal{D}_{\text{tr1}}|/|\mathcal{D}_{\text{train}}|=10\%$, $\epsilon=2$ and iteration $T=200$ in the experiment. 
% For each algorithm, we grid search the learning rates $\eta_1$ and $\eta_2$ from $\{0.1, 0.01, 0.001\}$.  
% To compare the performance of different algorithms, we evaluate their efficiency using the stationary gap $\|\nabla f(x_t, \theta_t)\|$ in \cite{xu2024derivativefree,pmlr-v119-liu20j}.
% Fig. \ref{poison_fig}(a) shows the trajectories of  stationary gap of the comparative algorithms. It is seen that the norm of stationary gap of Algorithm \ref{vr_rr} decreases faster than the other two algorithms. Fig. \ref{poison_fig}(b) presents the prediction accuracy of different algorithms over the testing dataset. We observe that the proposed Algorithm \ref{vr_rr} yields a lower prediction accuracy than the other algorithms, indicating that Algorithm \ref{vr_rr} achieves a superior attack performance.

We set the poisoning ratio $|\mathcal{D}_{\text{tr},1}|/|\mathcal{D}_{\text{train}}| = 10\%$, $\epsilon = 2$, and the number of iterations $T = 200$. For each algorithm, we perform a grid search for the learning rates $\eta_1$ and $\eta_2$ from the set $\{0.1, 0.01, 0.001\}$. To compare the performance of different algorithms, we evaluate their efficiency using the game-stationarity gap $\|\nabla f(x_t, \theta_t)\|$ as used in literature \cite{xu2024derivativefree,pmlr-v119-liu20j}. 
Fig. \ref{poison_fig}(a) shows the trajectories of the stationary gap for the comparative algorithms. It is observed that the norm of the stationary gap for Algorithm \ref{vr_rr} decreases faster than that of the other two algorithms. Fig. \ref{poison_fig}(b) presents the prediction accuracy of different algorithms over the testing dataset. We observe that the proposed Algorithm \ref{vr_rr} yields lower prediction accuracy compared to the other algorithms, indicating that Algorithm \ref{vr_rr} achieves superior attack performance.

% $$
% \nabla \mathbf{G}\left(x_t, y_t\right)=\left[\begin{array}{c}
% \frac{1}{\eta_x}\left(x_t-\operatorname{Proj}_{\mathcal{X}}\left(x_t-\eta_x \nabla_x f\left(x_t, y_t\right)\right)\right) \\
% \frac{1}{\eta_y}\left(y_t-\operatorname{Proj}_{\mathcal{Y}}\left(y_t+\eta_y \nabla_y f\left(x_t, y_t\right)\right)\right)
% \end{array}\right] .
% $$

\subsection{Distributionally Robust Optimization}
% \par We consider nonconvex-regularized variant of DRO problem which arises in distributional robust learning. For a dataset $\left\{\left(z_i, t_i\right)\right\}_{i=1}^n$, where $z_i \in \mathbb{R}^d$ is the feature and $t_i \in\{-1,1\}$ is the label, the DRO problem is formulated as follows:
% $$
% \min _{x \in \mathbb{R}^d} \max _{y \in \Delta_n} f(x, y)=\sum_{i=1}^n y_i l_i(x)-V(y)+g(x),
% $$
% where $\Delta_n$ represents the simplex in $\mathbb{R}^n$, $y_i$ is the $i$-th component of variable $y$ and $l_i(x)=\log (1+e^{-t_i z_i^\top x})$. The function $V$ is a divergence measure defined by $V(y)=\frac{1}{2} \lambda_1\|n y-\mathbf{1}\|^2$. 
% The function $g$ is a nonconvex regularization with $g(x)=\lambda_2 \sum_{i=1}^d \alpha x_i^2/(1+\alpha x_i^2)$. Following the experimental settings in \cite{SREDA_nips2020,den_nonmini_nips}, we set $\lambda_1=1/n^2$, $\lambda_2=0.001$ and $\alpha=10$. 

We consider a nonconvex-regularized variant of the DRO problem, which arises in distributionally robust learning. Given a dataset $\left\{\left(z_i, t_i\right)\right\}_{i=1}^n$, where $z_i \in \mathbb{R}^d$ represents the features and $t_i \in \{-1,1\}$ represents the labels, the robust logistic regression problem is formulated as follows:
$$
\min_{x \in \mathbb{R}^d} \max_{y \in \Delta_n} f(x, y) = \sum_{i=1}^n y_i l_i(x) - V(y) + g(x),
$$
where $\Delta_n$ denotes the simplex in $\mathbb{R}^n$, $y_i$ is the $i$-th component of the variable $y$, and the  logistic loss function $l_i(x) = \log(1 + e^{-t_i z_i^\top x})$. The function $V$ is a divergence measure defined as $V(y) = \frac{1}{2} \lambda_1 \|ny - \mathbf{1}\|^2$ and the nonconvex regularization $g$ is given by $g(x) = \lambda_2 \sum_{i=1}^d \alpha x_i^2/(1 + \alpha x_i^2)$. Following the settings in \cite{SREDA_nips2020, den_nonmini_nips}, we set $\lambda_1 = 1/n^2$, $\lambda_2 = 0.001$, and $\alpha = 10$ in our experiment.

\begin{figure}[h]
    \centering
    \includegraphics[width=8cm]{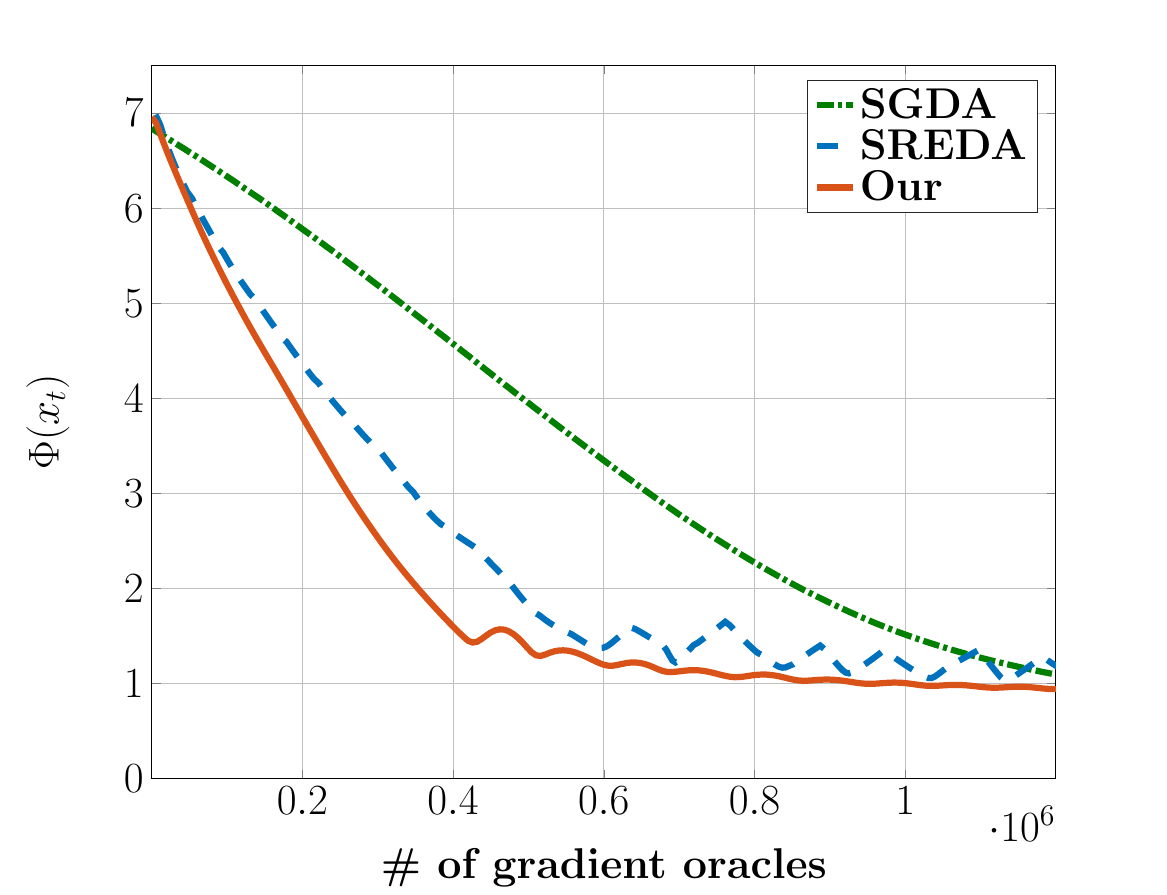}
    \caption{Performance of $\Phi(x_t)$ with respect to the number of gradient oracles  for algorithms in robust logistic regression.}
    \label{RLR_a9a_fig}
\end{figure}
% \par We conduct the experiment on the real-world dataset a9a, where $d=123$ and $n=32561$. To compare the performance of different algorithms, we evaluate the value of $\Phi$ (defined in \eqref{min_opti}) with respect to the number of gradient oracles. The convergence results are presented in Fig. \ref{RLR_a9a_fig}. From the convergence trajectories, we see that the proposed Algorithm \ref{vr_rr} converges faster than other baseline algorithms, which verifies the performance of our algorithm.

The experiment is conducted on the real-world dataset a9a, where $d = 123$ and $n = 32,561$. We evaluate the  value of $\Phi(x)$ (defined in \eqref{min_opti}) with respect to the number of gradient oracles to compare the performance of different algorithms. From the convergence trajectories shown in Fig. \ref{RLR_a9a_fig}, we observe that the proposed Algorithm \ref{vr_rr} converges faster than other baseline algorithms, verifying its superior performance.

\section{Conclusion}\label{conclusion}
% For nonconvex-strongly-concave minimax problems, this paper has developed a variance-reduced shuffling gradient algorithm. The proposed algorithm has mitigated the impact of gradient variance present in most existing stochastic algorithms and achieves a better gradient complexity than the SGDA with RR algorithm. In addition, the proposed algorithm requires much fewer projection operators than the variance-reduced SREDA algorithm and shows superior practical performance. One promising research direction could be the research of variance-reduced algorithms for more general nonconvex-concave minimax problems.

For nonconvex-strongly concave minimax problems, this paper developed a variance-reduced shuffling gradient algorithm with various shuffling schemes. The proposed algorithm mitigates the impact of gradient variance present in most existing stochastic algorithms and achieves a better gradient complexity compared to existing shuffling algorithms for minimax optimization. The complexity of the proposed stochastic algorithm matches the tight complexity performance achieved by its deterministic counterpart, showcasing its efficiency and effectiveness in addressing minimax problems. 
% Additionally, it requires significantly fewer projection operators than the variance-reduced SREDA algorithm, demonstrating superior practical performance. 
Further research direction could be the development of sampling-without-replacement algorithms for more general nonconvex-concave finite-sum minimax problems.

\bibliographystyle{IEEEtran}
\bibliography{refer}
\end{document}